	\newtheorem{assumption}{Assumption}
	\newtheorem{lemma}{Lemma}
	\newtheorem{remark}{Remark}
	\newtheorem{theorem}{Theorem}
	\newcommand{\va}{{\mathbf{a}}}
\newcommand{\vb}{{\mathbf{b}}}
\newcommand{\vh}{{\mathbf{h}}}
\newcommand{\vu}{{\mathbf{u}}}
\newcommand{\vv}{{\mathbf{v}}}
\newcommand{\vw}{{\mathbf{w}}}
\newcommand{\vx}{{\mathbf{x}}}
\newcommand{\vy}{{\mathbf{y}}}
\newcommand{\vA}{{\mathbf{A}}}
\newcommand{\vC}{{\mathbf{C}}}
\newcommand{\vF}{{\mathbf{F}}}
\newcommand{\vH}{{\mathbf{H}}}
\newcommand{\vI}{{\mathbf{I}}}
\newcommand{\vQ}{{\mathbf{Q}}}
\newcommand{\vR}{{\mathbf{R}}}
\newcommand{\vU}{{\mathbf{U}}}
\newcommand{\vV}{{\mathbf{V}}}
\newcommand{\vX}{{\mathbf{X}}}
\newcommand{\vY}{{\mathbf{Y}}}
\newcommand{\cC}{{\mathcal{C}}}
\newcommand{\cF}{{\mathcal{F}}}
\newcommand{\cQ}{{\mathcal{Q}}}
\newcommand{\EE}{{\mathbb{E}}}
\newcommand{\RR}{\mathbb{R}}
\newcommand{\vzero}{\mathbf{0}}
\newcommand{\vone}{{\mathbf{1}}}
\newcommand{\oX}{\overline{\vX}}
\newcommand{\oY}{\overline{\vY}}
\newcommand{\ox}{\overline{\vx}}
\newcommand{\T}{\intercal}
\newcommand{\tX}{\widetilde{\vX}}
\newcommand{\tY}{\widetilde{\vY}}
\newcommand{\olambda}{\overline{\lambda}}
\newcommand{\hlambda}{\widehat{\lambda}}
\newcommand{\vzeta}{\boldsymbol{\zeta}}
\newcommand{\norm}[1]{\left\| #1 \right\|}      % norm 
\begin{document}
		%
		% paper title
		% Titles are generally capitalized except for words such as a, an, and, as,
		% at, but, by, for, in, nor, of, on, or, the, to and up, which are usually
		% not capitalized unless they are the first or last word of the title.
		% Linebreaks \\ can be used within to get better formatting as desired.
		% Do not put math or special symbols in the title.
		\title{A  Robust Compressed Push-Pull Method for Decentralized Nonconvex Optimization}
		%
		%
		% author names and IEEE memberships
		% note positions of commas and nonbreaking spaces ( ~ ) LaTeX will not break
		% a structure at a ~ so this keeps an author's name from being broken across
		% two lines.
		% use \thanks{} to gain access to the first footnote area
		% a separate \thanks must be used for each paragraph as LaTeX2e's \thanks
		% was not built to handle multiple paragraphs
		%
		
		%\author{Yiwei Liao et al.% <-this % stops a space
			%\thanks{ Parts of the results appear in Proceedings of the 57th IEEE Conference on Decision and Control. (Corresponding authors: .)}
			%\thanks{This work was supported in parts by the NSF grant .}
			%\thanks{X.~Y1 is with .  (e-mail: XY1@hh.edu.cn).}% <-this % stops a space
			%\thanks{Manuscript received April 19, 2005; revised August 26, 2015.}
			%}
		
		\author{Yiwei Liao, Zhuorui Li, Shi Pu, and Tsung-Hui Chang% <-this % stops a space
			\thanks{Yiwei Liao is with the School of Electrical Engineering, Sichuan University, China. 
				Zhuorui Li is with the H. Milton Stewart School of Industrial and System Engineering, Georgia Institute of Technology, Atlanta, USA.  
				Shi Pu is with the School of Data Science, The Chinese University of Hong Kong, Shenzhen, China.
				Tsung-Hui Chang is with the School of Science and Engineering, The Chinese University of Hong Kong, Shenzhen, China.
				{\tt\small (emails: liaoyiwei@scu.edu.cn, lizhuorui27@gmail.com,  pushi@cuhk.edu.cn, changtsunghui@cuhk.edu.cn)}}%
		}

	\maketitle
	
	% As a general rule, do not put math, special symbols or citations
	% in the abstract or keywords.
	\begin{abstract}
		In the modern paradigm of multi-agent networks, communication has become one of the main bottlenecks for decentralized optimization, where a large number of agents are involved in minimizing the average of the local cost functions. In this paper, we propose a robust compressed push-pull algorithm (RCPP) that combines gradient tracking with communication compression. In particular, RCPP is  robust under a much more general class of compression operators that allow both relative and absolute compression errors, in contrast to the existing works which can handle either one of them or assume convex problems.  We show that RCPP enjoys sublinear convergence rate for smooth and possibly nonconvex objective functions over general directed networks. Moreover, under the additional Polyak-Łojasiewicz condition, linear convergence rate can be achieved for RCPP. Numerical examples verify the theoretical findings and demonstrate the efficiency, flexibility, and robustness of the proposed algorithm.
	\end{abstract}
	
	% Note that keywords are not normally used for peerreview papers.
	\begin{IEEEkeywords}
		Decentralized optimization, nonconvex optimization, robust communication compression, directed graph,  gradient tracking.
	\end{IEEEkeywords}
	
	% For peer review papers, you can put extra information on the cover
	% page as needed:
	% \ifCLASSOPTIONpeerreview
	% \begin{center} \bfseries EDICS Category: 3-BBND \end{center}
	% \fi
	%
	% For peerreview papers, this IEEEtran command inserts a page break and
	% creates the second title. It will be ignored for other modes.
	\IEEEpeerreviewmaketitle
	\section{Introduction}
	% The very first letter is a 2 line initial drop letter followed
	% by the rest of the first word in caps.
	% 
	% form to use if the first word consists of a single letter:
	% \IEEEPARstart{A}{demo} file is ....
	% 
	% form to use if you need the single drop letter followed by
	% normal text (unknown if ever used by the IEEE):
	% \IEEEPARstart{A}{}demo file is ....
	% 
	% Some journals put the first two words in caps:
	% \IEEEPARstart{T}{his demo} file is ....
	% 
	% Here we have the typical use of a "T" for an initial drop letter
	% and "HIS" in caps to complete the first word.
	\IEEEPARstart{I}{n} this paper,  the following decentralized optimization problem is studied: 
	\begin{equation} \label{problem}
		\min _{\vx\in\RR^p}~  f(\vx):=\frac{1}{n}\sum _{i=1}^n f_i(\vx),
	\end{equation}
	where  $n$ is the number of agents, $\vx$ is the global decision variable, and the local objective function $f_i: \RR^p\rightarrow \RR$ is only held by agent $i$.  The goal is to find an optimal and consensual solution through local computation and local sharing of information in a directed communication network.  Problem \eqref{problem} has found extensive applications in signal processing; see the recent surveys \cite{chang2020distributed,liu2024survey} and the references therein.
	
	In recent years, many decentralized algorithms were proposed to solve \eqref{problem}. The seminal work \cite{Nedic2009distributed} proposed the distributed subgradient descent (DGD) method, where each agent updates its local copy by mixing with the received copies from neighbors in the network and moving towards the local gradient descent direction. However, under a constant step-size, DGD only converges to a neighborhood of the optimal solution. To obtain better convergence results, various works with bias-correction techniques were proposed, including EXTRA \cite{Shi2015Extra}, exact diffusion \cite{yuan2019exact1}, and gradient tracking based methods \cite{Xu2015Augmented,Di2016Next,Nedic2017achieving,Qu2018Harnessing}. These methods achieve linear convergence for minimizing strongly convex and smooth objective functions. While most of the existing works focus on the undirected graph, directed communication topology also draws significant attentions since it is involved in many scenarios. For example, in wireless networks where agents adopt different broadcasting power levels, the communication capability in one direction may not imply the other \cite{Nedic2018Network}. Several distributed methods have been contemplated under the more general directed network topology; see \cite{Nedic2017achieving,Tsianos2012PushSum,Nedic2015Distributed,zeng2017extrapush,xi2017dextra,xi2017add,Xin2018Linear,Xin2020General,Pu2020Robust,Pu2021Push,sun2022distributed} and the references therein. In particular, the push-pull method \cite{Xin2018Linear,Pu2021Push} has been shown to be effective for minimizing both convex and nonconvex objective functions.
	Interested readers may refer to  \cite{Nedic2018Network,yang2019survey} for a comprehensive survey on decentralized algorithms.
	
	In decentralized computation, exchanging complete information between neighboring agents may suffer from the communication bottleneck due to the limited energy and/or bandwidth. Using compression operators is one of promising ways to  reduce the communication expenses and has been widely studied in the literature  \cite{tang2018communication, Koloskova2019ChocoSGD, koloskova*2020decentralized,liu2020linear,Kajiyama2020Linear,kovalev2021linearly,Song2022CPP,Liao2022CGT,Magnusson2020Maintaining,xiong2022quantized}. Most of the works have considered the \emph{relative compression error} assumption, including unbiased compressors \cite{liu2020linear,kovalev2021linearly,Song2022CPP} and contractive biased compressors \cite{Koloskova2019ChocoSGD, koloskova*2020decentralized}, or the unification of them \cite{Liao2022CGT}. Recently, a few works have also considered quantized compression operators with \emph{absolute compression errors}  \cite{Kajiyama2020Linear,Magnusson2020Maintaining,xiong2022quantized}. 
	To explore a unified framework for (contractive biased) relative and absolute compression errors, the work in \cite{michelusi2022finitebit} studied finite-bit quantization.  However, it requires to assume the absolute compression error diminishes exponentially fast for the desired convergence. In \cite{nassif2023quantization}, the unbiased relative compression error was considered together with the absolute compression error, but the algorithm only converges to a neighborhood of the optimum. 
	
	In this paper, we propose a robust compressed push-pull method (RCPP) for decentralized optimization with communication compression over general directed networks. In particular,  the considered algorithm is robust against compression errors that satisfy a more general assumption, which unifies both (contractive biased and unbiased) relative and absolute compression errors. By comparison, the work in \cite{michelusi2022finitebit} considered only contractive biased relative compression error and exponentially diminishing absolute compression error, and the relative compression error studied in \cite{nassif2023quantization} needs to be unbiased.
	
	By employing the dynamic scaling compression technique to deal with the absolute compression error, and using difference compression for transmitting both the decision variables and the gradient trackers, RCPP provably achieves sublinear convergence for smooth and nonconvex objective functions under the general class of compression operators.  The convergence analysis pertains to constructing a linear system of inequalities involving the optimization error, consensus error, gradient tracking error and compression errors. Additionally, an appropriate Lyapunov function is designed for achieving the final results. Furthermore, if the Polyak-Łojasiewicz inequality (PL condition) is satisfied, linear convergence rate is demonstrated.  Such convergence guarantees are stronger than those given in \cite{michelusi2022finitebit,nassif2023quantization}.
	
	The main contribution of this paper is summarized as follows:
	\begin{itemize}
		\item For decentralized optimization with communication compression, we consider a general class of compression operators, which unifies the commonly used relative and absolute compression error  assumptions. Such a condition is most general in the decentralized optimization literature to the best of our knowledge.
		\item We propose a new method called the robust compressed push-pull algorithm that works over general directed networks. Based on the dynamic scaling compression technique  that deals with the absolute compression error, RCPP provably achieves  $\mathcal{O}(\frac{1}{K})$ sublinear convergence rate for minimizing smooth objective functions under the general unified assumption on the compression operators. To our knowledge, RCPP is the first compressed algorithm that provably achieves  $\mathcal{O}(\frac{1}{K})$ convergence rate for nonconvex problems over general directed networks.  The convergence analysis of RCPP pertains to constructing a linear system of inequalities involving various error terms. An appropriate Lyapunov function is then designed for demonstrating the final convergence results.
		\item When the objective functions further satisfy the Polyak-Łojasiewicz (PL) condition, we show RCPP enjoys linear convergence under the general assumption on the compression operators.
		\item Numerical results demonstrate that RCPP is efficient compared to the state-of-the-art methods and robust under various compressors.
	\end{itemize}
	Compared with the conference version \cite{liao2023linearly} which assumes the PL condition, this paper not only provides complete analysis, but also establishes the convergence guarantee for RCPP under general smooth nonconvex objective functions.
	
	\vspace{-3mm}
	\begin{table}[H]
		\centering
		\caption{\small Comparison of related works on decentralized optimization with communication compression for smooth  nonconvex problems.}	
		\label{table:Categorization-NCVX}
		\vspace{-2mm}
		%\scalebox{1}{
			\begin{threeparttable}
				\begin{tabular}{cccc}
					\hline
					References       &relative    & absolute   & graph\\
					\hline
					\cite{koloskova*2020decentralized,Tang2019DeepSqueeze}   &C   &$\times$    &Und \\
					\cite{Zhao2022BEER,Yau2023DoCoM}   &C  &$\times$    &Und \\
					\cite{Huang2023CEDAS}   &U  &$\times$    &Und \\
					\cite{Yi2022Communicationb,Xie2024Communicationefficient,Xu2023Compressed} &G  & $\times$  &Und \\
					\cite{Yi2022Communicationb,Xu2023Compressed} &$\times$  & $\checkmark$  &Und \\
					\cite{Taheri2020Quantized} &C &$\times$    &Di\\
					\textbf{our paper}      &\textbf{G}          &$\pmb{\checkmark}$           &\textbf{Di}    \\
					\hline
				\end{tabular}
			\end{threeparttable}
			%}
	\end{table}

	In Table~\ref{table:Categorization-NCVX} and Table~\ref{table:Categorization},  we compare the results of this paper with related works regarding the assumptions on the compression operators, objective functions, graph topologies and convergence guarantees. 
	\vspace{-3mm}
	\begin{table}[H]
		\centering
		\caption{\small Comparison of related works on decentralized optimization with communication compression.}	
		\label{table:Categorization}
		\vspace{-2mm}
		%\scalebox{1}{
			\begin{threeparttable}
				\begin{tabular}{@{}c@{}c@{~~}c@{}c@{}c@{~}c@{}}
					\hline
					References       &relative    & absolute & convergence  & graph & function\\
					\hline
					\cite{Koloskova2019ChocoSGD,koloskova*2020decentralized}   &C\tnote{1}   &$\times$    &sublinear &Und &SVX\tnote{4}\\
					\cite{liu2020linear,kovalev2021linearly}  &U      &$\times$      &linear &Und &SVX\\
					\cite{Zhang2023Innovation,Yau2023DoCoM}     &C     &$\times$      &linear &Und &SVX\cite{Zhang2023Innovation}, PL\cite{Yau2023DoCoM}\\
					\cite{Liao2022CGT,Yi2022Communicationb} &G  & $\times$ & linear &Und &SVX\cite{Liao2022CGT}, PL\cite{Yi2022Communicationb}\\
					\cite{Kajiyama2020Linear,Magnusson2020Maintaining,Yi2022Communicationb} &$\times$   &Q    &linear &Und &SVX\cite{Kajiyama2020Linear,Magnusson2020Maintaining}, PL\cite{Yi2022Communicationb}\\
					\cite{michelusi2022finitebit}  &C   &$\checkmark$   &linear*\tnote{2} &Und &SVX\\
					\cite{nassif2023quantization}   &U  &$\checkmark$  &neighborhood$^3$  &Und &SVX\\	
					\cite{Song2022CPP}  &U       &$\times$         &linear &Di &SVX\\
					\cite{xiong2022quantized} &$\times$            &Q    &linear &Di &SVX\\
					\textbf{our paper}      &\textbf{G}          &$\pmb{\checkmark}$         &\textbf{linear}  &\textbf{Di}     &\textbf{PL}\\	
					\hline
				\end{tabular}
				\begin{tablenotes}
					\footnotesize
					\item[1] `C', `U', and `G' represent contractive biased, unbiased, and general relative compression assumptions, respectively. `Q' represents quantizer. `Und' and `Di' denote undirected and directed graphs, respectively.
					\item[2] * The result has extra requirement, e.g., exponentially decaying error.
					\item[3] The algorithm converges to the neighborhood of the optimal solution.
					\item[4] `SVX' and `PL' represent strongly convex functions and the PL condition, respectively.
				\end{tablenotes}
			\end{threeparttable}
			%}
	\end{table}
	The rest of this paper is organized as follows. 
	In  Section~\ref{Sec: Preliminary}, we state the standing assumptions and discuss the compression techniques. The RCPP method is introduced in Section~\ref{Sec: RCPP}. In Section~\ref{Sec: CA}, we establish the convergence results of RCPP under communication compression. Numerical experiments are provided to verify the theoretical findings in Section~\ref{Sec: Simulation}. Finally, conclusions are given in Section~\ref{Sec: Conclusion}.
	
	\vspace{1em}
	{\bf Notation}: A vector is viewed as a column by default. The bold symbols $\mathbf{0}$ and $\mathbf{1}$ represent the $n$-dimensional column vectors with all entries equal to $0$ and $1$, respectively. Each agent $i$ holds a local copy $\vx_i\in\mathbb{R}^p$ of the decision variable and an auxiliary variable $\vy_i\in\mathbb{R}^p$ to track the average gradient. Vectors $\vx_{i}^{k}$ and $\vy_{i}^{k}$ represent their corresponding values at the $k$-th iteration. For simplicity, denote the aggregated variables as
	$\vX := [\vx_1, \vx_2, \ldots, \vx_n]^{\T}\in\mathbb{R}^{n\times p}$, 
	$ \vY := [\vy_1, \vy_2, \ldots, \vy_n]^{\T}\in\mathbb{R}^{n\times p}$. 
	At step $k$, $\vX^{k}$ and $\vY^{k}$ represent their corresponding values. The other aggregated variables $\vH_{x}$, $\vH_{y}$, $\vH_{R}$, $\vH_{C}$, $\vQ_{x}$, $\vQ_{y}$, $\widehat{\vX}$, $\widehat{\vY}$, $\tX$, and $\tY$ are defined similarly. 
	The aggregated gradients are 
	$
	\nabla \vF(\vX):=\left[\nabla f_1(\vx_1), \nabla f_2(\vx_2), \ldots, \nabla f_n(\vx_n)\right]^{\T}\in\mathbb{R}^{n\times p} 
	$  
	and the average of all the local gradients is
	$
	\nabla \overline \vF(\vX) := \frac{1}{n}\vone^\T \nabla \vF(\vX)=\frac{1}{n}\sum_{i=1}^n (\nabla f_i(\vx_i))^\T.
	$ 
	The notations $\|\cdot\|$ and $\|\cdot\|_F$ define the Euclidean norm of a vector and the Frobenius norm of a matrix, respectively. 
	
	The set of nodes (agents) is denoted by $\mathcal{N}=\{1,2,\ldots,n\}$.  A directed graph (digraph) is a pair $\mathcal{G}=(\mathcal{N},\mathcal{E})$, where the edge set $\mathcal{E}\subseteq \mathcal{N}\times \mathcal{N}$ consists of ordered pairs of nodes. If there exists a directed edge from node $i$ to node $j$ in $\mathcal{G}$, or $(i,j)\in\mathcal{E}$, then $i$ is called the parent node, and $j$ is the child node. The parent node can directly transmit information to the child node, but not the other way around. 
	Let $\mathcal{G}_\mathbf{W}=(\mathcal{N},\mathcal{E}_\mathbf{W})$ denote a digraph induced by a nonnegative square matrix $\mathbf{W}=[w_{ij}]$, where $(i,j)\in\mathcal{E}_\mathbf{W}$  if and only if $w_{ji}>0$. In addition, $\mathcal{R}_\mathbf{W}$ is the set of roots of all the possible spanning trees in $\mathcal{G}_\mathbf{W}$.
	
	\section{Problem Formulation}\label{Sec: Preliminary}
	In this section, we first provide the basic assumptions on the communication graphs and the objective functions. Then, we introduce a general assumption on the compression operators to unify both the relative and absolute compression errors.
	
	\subsection{Communication graphs and objective functions}
	Consider the following conditions on the communication graphs among the agents and the corresponding mixing matrices.
	\begin{assumption}\label{Assumption: network}
		The matrices $\vR$ and $\vC$ are both supported by  a strongly connected graph $\mathcal{G} = (\mathcal{N},\mathcal{E})$, i.e., 
		$\mathcal{E}_{\vR} = \{(j,i)\in\mathcal{N}\times\mathcal{N} \big| \vR_{ij} > 0 \} \subset \mathcal{E}$ 
		and 
		$\mathcal{E}_{\vC} = \{(j,i)\in\mathcal{N}\times\mathcal{N} \big| \vC_{ij} > 0 \} \subset \mathcal{E}$. 
		The matrix $\vR$ is row stochastic, and $\vC$ is column stochastic, i.e., $\vR\vone=\vone$ and $\vone^\T\vC=\vone^\T$. In addition, there exists at least one node that is a root of spanning trees for both $\mathcal{G}_{R}$ and $\mathcal{G}_{C^\T}$, i.e., $\mathcal{R}_{\vR} \cap \mathcal{R}_{\vC^\T} \neq \emptyset$.
	\end{assumption}
	\begin{remark}
		Assumption~\ref{Assumption: network} is weaker than requiring both $\mathcal{G}_{\vR}$ and $\mathcal{G}_{\vC}$ to be strongly connected \cite{Pu2021Push}. It implies that $\vR$ has a unique nonnegative left eigenvector $\vu_{R}$ w.r.t. eigenvalue $1$ with $\vu_{R}^\T\vone=n$, and $\vC$ has a unique nonnegative right eigenvector $\vu_{C}$ w.r.t. eigenvalue $1$ such that $\vu_{C}^\T\vone=n$. The nonzero entries of $\vu_{R}$ and $\vu_{C}$ correspond to the nodes in $\mathcal{R}_{\vR}$ and $\mathcal{R}_{\vC^\T}$, respectively. Since $\mathcal{R}_{\vR} \cap \mathcal{R}_{\vC^\T} \neq \emptyset$, we have $\vu_{R}^\T \vu_{C} > 0$.	For a more detailed explanation, please refer to \cite{Pu2021Push}.
	\end{remark}
	For the nonconvex setting, the objective functions are assumed to satisfy the following smoothness assumption.
	\begin{assumption}\label{Assumption: Lipschitz}
		For each agent $i$,  its gradient is $L_i$-Lipschitz continuous, i.e.,
		\begin{equation}\label{L-smooth}
			\|\nabla f_i(\vx)-\nabla f_i(\vx')\|\le L_i \|\vx-\vx'\|,\; \forall\vx,\vx'\in\mathbb{R}^p.
		\end{equation}
	\end{assumption}
	\begin{assumption}\label{Assumption: PL}
		The objective function $f$ satisfies the Polyak-Łojasiewicz inequality (PL condition), i.e.,
		\begin{equation}\label{PL}
			\norm{\nabla f(\vx)}^2\geq 2 \mu \big(f(\vx)-f(\vx^*)\big),
		\end{equation}
		where   $\vx^*$  is an optimal  solution to problem \eqref{problem}. 
	\end{assumption}
	\subsection{A unified compression assumption}\label{Sec: Compression}
	We now present a general assumption on the compression operators which incorporates both relative and absolute compression errors. 	
	\begin{assumption}\label{Assumption:General}
		The  compression operator $\cC \colon \RR^d \to \RR^d$  satisfies  
		\begin{align}\label{def:Ncompressor4}
			\EE_{\cC} \big[\norm{\cC(\vx) -\vx}^2 \big]&\leq C\norm{\vx}^2+\sigma^2,& ~\forall \vx \in \RR^d, 
		\end{align}
		for some constants $C,\sigma^2\ge 0$, and the $r$-scaling of $\cC$ satisfies 
		\begin{align}\label{def:contract4}
			\EE_{\cC} \big[\norm{\cC(\vx)/r -\vx}^2\big] &\leq (1-\delta)\norm{\vx}^2+\sigma^2_{r},& ~\forall \vx \in \RR^d , 
		\end{align}
		for some constants $r>0$, $\delta\in(0,1]$ and $\sigma^2_r\ge 0$.
	\end{assumption}
	Among the compression conditions considered for decentralized optimization algorithms with convergence guarantees, Assumption \ref{Assumption:General} is the weakest to the best of our knowledge. Specifically, if there is no absolute error, i.e., $\sigma^2=\sigma_{r}^2=0$, then Assumption \ref{Assumption:General} degenerates to the assumption in \cite{Liao2022CGT} that unifies the compression operators with relative errors. If there is no relative error, i.e., $C=0$ and $\delta=1$, then the condition becomes the assumption on the quantizers in \cite{Kajiyama2020Linear,xiong2022quantized}. If $C<1$, Assumption \ref{Assumption:General} reduces to the condition in \cite{michelusi2022finitebit}. Therefore, Assumption \ref{Assumption:General} provides a unified treatment for both relative and absolute compression errors.  Moreover, Assumption \ref{Assumption:General} does not require any condition on the high-precision representation of specific compression variables, and the machine precision can be deemed as the absolute compression error.  In the numerical examples, we will introduce the modified composition of quantization and Top-k compression  \cite{liao2023linearly}, which adheres to Assumption \ref{Assumption:General} and allows for fewer communication bits.
	\section{Proposed Robust Compressed Push-Pull Method}\label{Sec: RCPP}
	While Assumption~\ref{Assumption:General} provides a unified condition on the compression operators, new challenges are brought to the algorithm design and analysis. Without proper treatment for the compression errors, the algorithmic performance could deteriorate, particularly due to the absolute error that may lead to compression error accumulation. In this section, we first introduce the dynamic scaling compression technique that copes with the absolute compression error. Then, we propose the RCPP algorithm and discuss its connections with the existing methods.
	\subsection{Dynamic scaling compression technique}
	To tackle the challenge brought by the absolute compression error, we consider the dynamic scaling compression technique \cite{Kajiyama2020Linear}. Using a dynamic parameter $s_k$ associated with the $k$-th iteration, we provide the operator $\cQ(\vx)=s_k\cC(\vx/s_k)$. Then from Assumption~\ref{Assumption:General}, we know 
	\begin{align}
		\nonumber \EE_{\cQ} \big[\norm{\cQ(\vx) -\vx}^2\big]
		=& \EE_{\cC} \big[\norm{s_k\cC(\vx/s_k) -\vx}^2\big]\\
		\nonumber=& s_k^{2}\EE_{\cC} \big[\norm{\cC(\vx/s_{k}) -\vx/s_{k}}^2\big]\\
		\nonumber\leq& s_k^2(C\norm{\vx/s_k}^2+\sigma^2)\\
		=& C\norm{\vx}^2+s_k^2\sigma^2.
	\end{align}
	Similarly, we have
	\begin{align}
		\EE_{\cQ} \big[\norm{\cQ(\vx)/r -\vx}^2  \big] \leq (1-\delta)\norm{\vx}^2+s_k^2\sigma^2_{r}.
	\end{align}
	It should be noted that only $\cC(\vx/s_k)$ needs to be transmitted during the communication process, and the signal recovery is accomplished by calculating $\cQ(\vx)=s_k\cC(\vx/s_k)$ on the receiver's side. By using the dynamic scaling compression technique, the absolute errors can be controlled by decaying the parameter $s_k$.
	\subsection{Proposed robust compressed push-pull method}
	We describe the proposed RCPP method in Algorithm \ref{Alg:RCPPe}.  For simplicity, the matrix form is provided. First, we input the initial decision variables $\vX^{0}$ and initialize the gradient trackers $\vY^{0}=\nabla\vF(\vX^{0})$ along with the auxiliary compression variables $\vH_{x}^0=\vzero$, $\vH_{y}^0=\vzero$, $\vH_{R}^0=\vzero$, and $\vH_{C}^0=\vzero$. Then, we set the total number of iterations $K$, the relative compression error control parameters $\alpha_x$ and $\alpha_y$, the absolute compression error control parameters $\{s_k\}_{k\ge 0}$, and the global consensus parameters $\gamma_x$ and $\gamma_y$. Additionally, we decide $\Lambda=\text{diag}([\lambda_1,\lambda_2,\ldots,\lambda_n])$, where $\lambda_i$ represents the step-size of agent $i$.
	\vspace{-3mm}
	\begin{figure}[htbp]
		\centering
		\begin{minipage}{.99\linewidth}
			\begin{algorithm}[H]
				\caption{A Robust Compressed Push-Pull Method}
				\label{Alg:RCPPe}
				\textbf{Input:} initial values $\vX^{0}$, $\vY^{0}=\nabla\vF(\vX^{0})$, $\vH_{x}^0=\vzero$, $\vH_{y}^0=\vzero$, $\vH_{R}^0=\vzero$, $\vH_{C}^0=\vzero$, number of iterations $K$,  parameters $\alpha_x,\alpha_y$, $\{s_k\}_{k\ge 0}$, $\gamma_x,\gamma_y$, step-sizes $\Lambda=\text{diag}([\lambda_1,\lambda_2,\ldots,\lambda_n])$
				\begin{algorithmic}[1]
					\For{$k=0,1,2,\dots, K-1$}
					\State $\widetilde{\vX}^{k}=\vX^{k}-\Lambda \vY^{k}$ 
					\State $\vC^{k}_{x}=\cC((\widetilde{\vX}^{k}-\vH^{k}_{x})/s_k)$ 
					\State $\widehat{\vX}^{k}=\vH^{k}_{x}+\vQ^{k}_{x}$ $^1$ \footnotetext{$^1$ $\vQ^{k}_{x}$ is the result of  dynamic scaling compression with  $\vQ^{k}_{x}=\cQ(\widetilde{\vX}^{k}-\vH^{k}_{x})=s_k\cC((\widetilde{\vX}^{k}-\vH^{k}_{x})/s_k)=s_k\vC^{k}_{x}$. The operation for $\vQ^{k}_{y}$ is the same.}
					\State $\widehat{\vX}_{R}^{k}=\vH^{k}_{R}+\vR\vQ^{k}_{x}$ \Comment{Communication}
					\State $\vH^{k+1}_{x}=(1-\alpha_x)\vH^{k}_{x}+\alpha_x\widehat{\vX}^{k}$
					\State $\vH_{R}^{k+1}=(1-\alpha_x)\vH^{k}_{R}+\alpha_x\widehat{\vX}_{R}^{k}$
					\State $\vX^{k+1}=\widetilde{\vX}^{k}-\gamma_{x}(\widehat{\vX}^{k}-\widehat{\vX}_{R}^{k})$   
					\State $\widetilde{\vY}^{k}= \vY^{k}+\nabla\vF(\vX^{k+1})-\nabla\vF(\vX^{k})$ 
					\State $\vC^{k}_{y}=\cC((\widetilde{\vY}^{k}-\vH^{k}_{y})/s_k)$ 
					\State $\widehat{\vY}^{k}=\vH^{k}_{y}+\vQ^{k}_{y}$
					\State $\widehat{\vY}_{C}^{k}=\vH^{k}_{C}+\vC\vQ^{k}_{y}$ \Comment{Communication}
					\State $\vH^{k+1}_{y}=(1-\alpha_y)\vH^{k}_{y}+\alpha_y\widehat{\vY}^{k}$
					\State $\vH_{C}^{k+1}=(1-\alpha_y)\vH^{k}_{C}+\alpha_y\widehat{\vY}_{C}^{k}$
					\State $\vY^{k+1}=\widetilde{\vY}^{k}-\gamma_{y}(\widehat{\vY}^{k}-\widehat{\vY}_{C}^{k})$ 
					\EndFor
				\end{algorithmic}
				\noindent\textbf{Output:} $\vX^{K},\vY^{K}$
			\end{algorithm}
		\end{minipage}
	\end{figure}
	
	Lines 2 and  9 represent the local updates for the  decision variables and the gradient trackers, respectively. In Lines 3 and 10, the dynamic scaling compression technique is respectively applied to execute the difference compression between the local updates and the auxiliary variables. The difference compression reduces the relative compression errors \cite{Koloskova2019ChocoSGD,Liao2022CGT}, while the dynamic scaling compression controls the absolute compression errors.   Note that difference compression in Line 10 is also applied for transmitting the gradient trackers, which helps better handle the general relative compression errors, whereas \cite{Song2022CPP} only considered direct compression.
	
	The operator $\cQ$ is a dynamic scaling compressor defined by $\cQ(\vx)=s_k\cC(\vx/s_k)$. 
	The compressed vector  $\cC((\widetilde{\vx}_{i}^{k}-\vh^{k}_{i,x})/s_k)$  is transmitted to the neighbors of agent $i$ and recovered by computing $s_k \cC((\widetilde{\vx}_{i}^{k}-\vh^{k}_{i,x})/s_k)$ after communication, where $\widetilde{\vx}_{i}^{k}$ and $\vh^{k}_{i,x}$ denote agent $i$'s local update and auxiliary variable, respectively. It is worth noting that if the dynamic scaling compression technique is not used, then the absolute compression error would accumulate and significantly impact the algorithm's convergence.  Using an appropriately designed decaying $\{s_k\}$ ensures that the absolute compression error is summable, thereby making the algorithm robust to such an error. This is also the primary distinction compared to the algorithm design of CPP \cite{Song2022CPP}.
	
	In Lines 4 and 11, the decision variables and the gradient trackers are locally recovered, respectively. Lines 5 and 12 represent the communication steps, where each agent mixes the received compressed vectors multiplied by $s_k$.  The variables $\widehat{\vX}_{R}^{k}$ and $\widehat{\vY}_{C}^{k}$ are introduced to store the  aggregated information received from the communication updates. By introducing such  auxiliary variables, there is no need to store all the neighbors' reference points \cite{Koloskova2019ChocoSGD,liu2020linear}. Lines 6-7 and 13-14 update the auxiliary variables, where parameters $\alpha_x,\alpha_y$ control the relative compression errors; see e.g., \cite{Liao2022CGT} for reference. The consensus updates are performed in Lines 8 and 15, where $\gamma_{x},\gamma_{y}$ are the global consensus parameters to guarantee the algorithmic convergence. It is worth noting that RCPP employs the adapt-then-combine (ATC) strategy compared with CPP \cite{Song2022CPP}, which accelerates the convergence \cite{Nedic2017achieving}.
	
	To understand the connection between RCPP and the Push-Pull/AB algorithm \cite{Xin2018Linear, Pu2021Push}, note that we have $\vH_{R}^0=\vR \vH_{x}^0$ and $\vH_{C}^0=\vC \vH_{y}^0$ from the initialization. By induction, it follows  that $\widehat{\vX}_{R}^{k}=\vR \widehat{\vX}^{k}$, $\widehat{\vY}_{C}^{k}=\vC \widehat{\vY}^{k}$ and  $\vH_{R}^k=\vR \vH_{x}^k$, $\vH_{C}^k=\vC \vH_{y}^k$. Recalling Lines 8 and 15 in Algorithm~\ref{Alg:RCPPe}, we have
	\begin{align}
		\nonumber\vX^{k+1}=&\widetilde{\vX}^{k}-\gamma_{x}(\widehat{\vX}^{k}-\widehat{\vX}_{R}^{k})
		=\widetilde{\vX}^{k}-\gamma_{x}(\widehat{\vX}^{k}-\vR \widehat{\vX}^{k})\\
		\label{Xkplus}=&\widetilde{\vX}^{k}-\gamma_{x}(\vI-\vR)\widehat{\vX}^{k}
	\end{align}
	and
	\begin{align}
		\nonumber \vY^{k+1}=&\widetilde{\vY}^{k}-\gamma_{y}(\widehat{\vY}^{k}-\widehat{\vY}_{R}^{k})
		=\widetilde{\vY}^{k}-\gamma_{y}(\widehat{\vY}^{k}-\vC \widehat{\vY}^{k})\\
		\label{Ykplus}=&\widetilde{\vY}^{k}-\gamma_{y}(\vI-\vC)\widehat{\vY}^{k}.
	\end{align}
	If $\tX^{k}$ and $\tY^{k}$ are not compressed, i.e., $\widehat{\vX}^{k}=\tX^{k}$ and $\widehat{\vY}^{k}=\tY^{k}$, then
	\begin{align}
		\nonumber \vX^{k+1}=&\tX^{k}-\gamma_{x} (\vI-\vR)\tX^{k}\\
		=&[(1-\gamma_{x})\vI+\gamma_x \vR](\vX^{k}-\Lambda \vY^{k}),
	\end{align}
	and 
	\begin{align}
		\nonumber &\vY^{k+1}=\tY^{k}-\gamma_{y} (\vI-\vC)\tY^{k}\\
		&~=[(1-\gamma_{y})\vI+\gamma_y\vC](\vY^{k}+\nabla\vF(\vX^{k+1})-\nabla\vF(\vX^{k})).
	\end{align}
	Letting the consensus step-sizes be $\gamma_{x}=1$ and $\gamma_{y}=1$, the above updates recover those in the Push-Pull/AB algorithm \cite{Xin2018Linear, Pu2021Push}. 
	
	In addition, RCPP retains the property of gradient tracking based methods. From Line 15 in Algorithm 1,
	\begin{align}
		\nonumber\vone^\T \vY^{k+1}=&\vone^\T  (\widetilde{\vY}^{k}-\gamma_{y}(\vI-\vC)\widehat{\vY}^{k})\\
		\nonumber =&\vone^\T ( \vY^{k}+\nabla\vF(\vX^{k+1})-\nabla\vF(\vX^{k}))\\
		=&\vone^\T \nabla\vF(\vX^{k+1}),
	\end{align}
	where the second equality is from $\vone^\T (\vI-\vC)=\vzero$, and the last equality is deduced by induction given that $\vY^{0}=\nabla\vF(\vX^{0})$. Define $\oX^{k}=\frac{1}{n}\vu_{R}^{\T}\vX^{k}$	and $\oY^{k}=\frac{1}{n}\vone^{\T}\vY^{k}$.  Once $(\vx_{i}^{k})^\T\rightarrow\oX^{k}$ and $(\vy_{i}^{k})^\T\rightarrow\oY^{k}$, then each agent can track the average gradient, i.e.,  
	\begin{align}
		(\vy_{i}^{k})^\T\rightarrow\oY^{k}=\frac{1}{n}\vone^\T\nabla\vF(\vX^{k})\rightarrow \frac{1}{n}\vone^\T\nabla\vF(\vone\oX^{k}).
	\end{align}
	
	\section{Convergence analysis}\label{Sec: CA}
	In this section, we study the convergence property of RCPP under smooth objective functions  (with or without the PL condition).
	
	For simplicity of notation, denote $\Pi_{R}=\vI-\frac{\vone \vu_{R}^{\T}}{n}$, $\Pi_{C}=\vI-\frac{\vu_{C}\vone^{\T}}{n}$ and $\vX^*=(\vx^*)^\T\in\mathbb{R}^{1\times p}$. Denote $\overline{\lambda}=\frac{1}{n}\vu_{R}^{\T}\Lambda\vu_{C}$, $\hlambda=\max\limits_{i}\{\lambda_i\}$. From Assumption \ref{Assumption: Lipschitz}, the gradient of $f$ is $L$-Lipschitz continuous, where $L=\max{\{L_i\}}$.
	
	The main ideas of the convergence analysis are as follows. First, we bound the optimization error $\Omega_{o}^{k}:=\EE\big[f(\oX^{k})-f(\vX^*)\big]$, consensus error $\Omega_{c}^{k}:=\EE\big[\|\Pi_{R}\vX^{k}\|_R^2\big]$, gradient tracking error $\Omega_{g}^{k}:=\EE\big[\|\Pi_{C}\vY^{k}\|_C^2\big]$, and compression errors $\Omega_{cx}^{k}:=\EE\big[\| \tX^{k}- \vH^{k}_x \|_F^2\big]$ and  $\Omega_{cy}^{k}:=\EE\big[\|\vY^{k}-\vH^{k}_y\|_F^2\big]$ through a linear system of inequalities, where $\|\Pi_{C}\vY^{k}\|_R$ and $\|\Pi_{C}\vY^{k}\|_C$ are specific norms introduced in Lemma \ref{Norm_R_C}. Then, we derive the descent property of the objective function by leveraging the smoothness of the gradient. Finally, the algorithmic convergence is demonstrated using an appropriately designed Lyapunov function.
	\subsection{Some Key Auxiliary results}
	We first introduce two supporting lemmas. 
	\begin{lemma}\label{Norm_R_C}
		There exist invertible matrices  $\widetilde{\vR},\widetilde{\vC}$ such that the induced vector norms $\norm{\cdot}_{R}$ and $\norm{\cdot}_{C}$ satisfy $\norm{\vv}_{R}=\Vert\widetilde{\vR}\vv\Vert$ and $\norm{\vv}_{C}=\Vert\widetilde{\vC}\vv\Vert$, respectively.  In addition, the following properties hold:
		\begin{itemize}
			\item[(1)]
			For any vector $\vv$, we have $\norm{\vv}\leq \norm{\vv}_{R}, \norm{\vv}\leq \norm{\vv}_{C}$, and there exist constants $\delta_{R,2}, \delta_{C,2}$ such that $\norm{\vv}_{R}\leq \delta_{R,2} \norm{\vv},  \norm{\vv}_{C}\leq \delta_{C,2}\norm{\vv}$.
			\item[(2)]
			For any $\gamma_{x},\gamma_{y} \in (0,1]$, we have
			$\norm{\Pi_{R}\vR_{\gamma}}_{R}\leq 1-\theta_{R} \gamma_{x}$ 
			and 
			$\norm{\Pi_{C}\vC_{\gamma}}_{C}\leq 1-\theta_{C} \gamma_{y},$ 
			where $\vR_{\gamma}=\vI-\gamma_x(\vI-\vR)$, $\vC_{\gamma}=\vI-\gamma_y(\vI-\vC)$, and $\theta_{R}$ and $\theta_{C}$ are constants in $(0,1]$. 
			\item[(3)]
			There hold $\norm{\Pi_{R}}_{R}=\norm{\Pi_{C}}_{C}=1$, $\norm{\vR}_{R}\leq 2, \norm{\vC}_{C}\leq 2$, $\norm{\vR_{\gamma}}_{R}\leq 2,\norm{\vC_{\gamma}}_{C}\leq 2$ and $\norm{\vI-\vR}_{R}\leq 3, \norm{\vI-\vC}_{C}\leq 3$. 
		\end{itemize}
	\end{lemma}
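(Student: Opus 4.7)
The plan is to construct the norms $\|\cdot\|_{R}$ and $\|\cdot\|_{C}$ via a Horn--Johnson style argument built on the spectral structure of $\vR$ and $\vC$; I describe only the construction for $\|\cdot\|_{R}$, since $\|\cdot\|_{C}$ is symmetric. First I would verify that $\Pi_{R}$ and $\vR$ commute: using $\vR\vone=\vone$ and $\vu_{R}^{\T}\vR=\vu_{R}^{\T}$, direct computation gives
\begin{equation*}
\Pi_{R}\vR \;=\; \vR - \tfrac{\vone \vu_{R}^{\T}}{n} \;=\; \vR\Pi_{R}.
\end{equation*}
Consequently $\Pi_{R}\vR$ has the same spectrum as $\vR$ except that the simple eigenvalue $1$ (guaranteed by Assumption~\ref{Assumption: network} via $\mathcal{R}_{\vR} \cap \mathcal{R}_{\vC^\T} \neq \emptyset$ and strong connectivity) is replaced by $0$. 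All other eigenvalues have modulus at most some $\rho_{R}<1$.

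Next, I would invoke the standard refinement of Gelfand's formula: for any $\epsilon\in(0,1-\rho_{R})$ there exists a nonsingular $\widetilde{\vR}$ such that $\|\widetilde{\vR}(\Pi_{R}\vR)\widetilde{\vR}^{-1}\|_{2}\le \rho_{R}+\epsilon$. To simultaneously obtain the projection identity $\|\Pi_{R}\|_{R}=1$ needed in part (3), I would strengthen the construction so that $\widetilde{\vR}$ also orthogonalizes the splitting $\RR^{n}=\mathrm{span}(\vone)\oplus\ker(\vu_{R}^{\T})$, for instance by taking the first row of $\widetilde{\vR}$ proportional to $\vu_{R}^{\T}$ and the remaining rows to be a basis of $\ker(\vu_{R}^{\T})$ chosen (via a block-level Horn--Johnson argument) so that the restriction of $\vR$ to this invariant subspace has Euclidean operator norm at most $\rho_{R}+\epsilon$. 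Under such $\widetilde{\vR}$, the matrix $\widetilde{\vR}\Pi_{R}\widetilde{\vR}^{-1}=\mathrm{diag}(0,1,\ldots,1)$ has operator norm exactly $1$, so $\|\Pi_{R}\|_{R}=1$, and similarly $\|\vI-\Pi_{R}\|_{R}=1$. Defining $\|\vv\|_{R}:=\|\widetilde{\vR}\vv\|_{2}$ and scaling $\widetilde{\vR}$ so that $\sigma_{\min}(\widetilde{\vR})\ge 1$ gives $\|\vv\|\le\|\vv\|_{R}\le \sigma_{\max}(\widetilde{\vR})\|\vv\|=:\delta_{R,2}\|\vv\|$, which establishes part (1) with $\theta_{R}:=1-(\rho_{R}+\epsilon)\in(0,1]$.

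For part (2), I would use the convex-combination structure $\vR_{\gamma}=(1-\gamma_{x})\vI+\gamma_{x}\vR$ and the triangle inequality:
\begin{align*}
\|\Pi_{R}\vR_{\gamma}\|_{R}
&\le (1-\gamma_{x})\|\Pi_{R}\|_{R}+\gamma_{x}\|\Pi_{R}\vR\|_{R}\\
&\le (1-\gamma_{x})+\gamma_{x}(1-\theta_{R})=1-\theta_{R}\gamma_{x}.
\end{align*}
For the remaining bounds in part (3), I would write $\vR=(\vI-\Pi_{R})+\Pi_{R}\vR$ and combine $\|\vI-\Pi_{R}\|_{R}=1$ with $\|\Pi_{R}\vR\|_{R}\le 1-\theta_{R}$ to get $\|\vR\|_{R}\le 2$; the bound $\|\vR_{\gamma}\|_{R}\le 2$ follows from the same convex-combination identity using $\|\vI\|_{R}=1$, and $\|\vI-\vR\|_{R}\le 1+2=3$ is immediate. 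The analogous construction for $\widetilde{\vC}$ mirrors everything with $\vu_{C}$ playing the role of $\vone$ and $\vone$ playing the role of $\vu_{R}$, since $\vC$ is column stochastic with right eigenvector $\vu_{C}$ and left eigenvector $\vone^{\T}$.

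The main obstacle lies in the joint construction of $\widetilde{\vR}$: one must choose a single change of basis that both makes the eigenvalue-$1$ direction orthogonal to its invariant complement (so that the two complementary projections have unit norm) and brings the action of $\vR$ on the complement arbitrarily close to its spectral radius (so that strict contractivity holds). Decoupling these two requirements in the proof—essentially by performing the Horn--Johnson refinement inside the orthogonal complement while keeping the dominant direction isolated—is the only nontrivial technical step; the remaining items are routine algebraic consequences of operator-norm inequalities.
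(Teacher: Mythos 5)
The paper does not actually prove this lemma in-text; it defers entirely to the supplementary material of \cite{Song2022CPP}. Your strategy---isolate the eigenvalue-$1$ direction, apply the Horn--Johnson/Gelfand refinement to the $\vR$-invariant complement $\ker(\vu_{R}^{\T})$, and then obtain parts (2)--(3) from $\norm{\Pi_{R}}_{R}=1$ and $\norm{\Pi_{R}\vR}_{R}\leq 1-\theta_{R}$ via the convex-combination identity $\vR_{\gamma}=(1-\gamma_{x})\vI+\gamma_{x}\vR$ and the splitting $\vR=(\vI-\Pi_{R})+\Pi_{R}\vR$---is the standard route and is, in substance, the argument the cited reference uses. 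Those algebraic steps, and the scaling that yields $\delta_{R,2}$ in part (1), are all correct.

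Two things need fixing. First, your concrete recipe for $\widetilde{\vR}$ is transposed. If the first row of $\widetilde{\vR}$ is $c\vu_{R}^{\T}$ and the remaining rows are (transposes of) a basis $B$ of $\ker(\vu_{R}^{\T})$, then $\widetilde{\vR}\vone=(cn,\,B^{\T}\vone)$ with $B^{\T}\vone\neq\vzero$ whenever $\vu_{R}\not\propto\vone$, so $\widetilde{\vR}\Pi_{R}\widetilde{\vR}^{-1}$ is \emph{not} $\mathrm{diag}(0,1,\dots,1)$ but an oblique rank-$(n-1)$ idempotent (range $e_{1}^{\perp}$, kernel not orthogonal to it), which has spectral norm strictly greater than $1$. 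Since $\norm{\Pi_{R}}_{R}=1$ is load-bearing for both part (2) and the bounds $\norm{\vR}_{R}\leq 2$, $\norm{\vI-\vR}_{R}\leq 3$, this breaks the chain. The correct condition is the dual one: the remaining rows of $\widetilde{\vR}$ must annihilate $\vone$---equivalently, columns $2,\dots,n$ of $\widetilde{\vR}^{-1}$ (the new basis vectors) must span $\ker(\vu_{R}^{\T})$. With that choice, $\widetilde{\vR}\Pi_{R}\widetilde{\vR}^{-1}=\mathrm{diag}(0,1,\dots,1)$ and $\widetilde{\vR}(\Pi_{R}\vR)\widetilde{\vR}^{-1}=\mathrm{diag}(0,S)$ with $S$ the Horn--Johnson representative of $\vR$ restricted to $\ker(\vu_{R}^{\T})$, and the rest of your proof goes through verbatim. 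Second, the claim that all non-unit eigenvalues of $\vR$ have modulus at most $\rho_{R}<1$ does not follow from the spanning-tree condition alone (a permutation matrix is a counterexample: its non-unit eigenvalues lie on the unit circle); it additionally needs aperiodicity, e.g.\ the positive-diagonal condition imposed in \cite{Pu2021Push,Song2022CPP}. This caveat is inherited from the paper's own terse statement of Assumption~\ref{Assumption: network}, but a self-contained proof should say where $\rho_{R}<1$ actually comes from.
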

	\begin{proof}
		See the supplementary material of \cite{Song2022CPP}.
	\end{proof}
	\begin{lemma}\label{Lem:Yk}
		For $\norm{\vY^{k}}_{F}^{2}$, we have
		\begin{equation}\label{YkNormLem}
			\begin{aligned}
				\norm{\vY^{k}}_{F}^{2}
				\leq& 3\norm{ \Pi_{C}\vY^{k} }_{C}^{2}
				+\frac{3\norm{ \vu_{C} }^{2}}{n} L^2 \norm{ \Pi_{R}\vX^{k} }_{R}^{2} \\
				&+3 \norm{\vu_{C}}^{2}\norm{\nabla f(\oX^{k})}^{2}.
			\end{aligned}
		\end{equation}
	\end{lemma}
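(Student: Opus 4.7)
The plan is to decompose $\vY^{k}$ into three orthogonal-in-spirit pieces corresponding to the three terms on the right-hand side, and then bound each piece separately.

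First I would exploit the gradient tracking invariant already derived in the paper, namely $\vone^{\T}\vY^{k}=\vone^{\T}\nabla\vF(\vX^{k})$, so that $\oY^{k}=\tfrac{1}{n}\vone^{\T}\nabla\vF(\vX^{k})$. Using the definition $\Pi_{C}=\vI-\tfrac{\vu_{C}\vone^{\T}}{n}$, I can write the clean identity
\begin{equation*}
\vY^{k}=\Pi_{C}\vY^{k}+\vu_{C}\oY^{k}
=\Pi_{C}\vY^{k}+\vu_{C}\bigl(\oY^{k}-\nabla f(\oX^{k})^{\T}\bigr)+\vu_{C}\nabla f(\oX^{k})^{\T}.
\end{equation*}
Applying the elementary inequality $\norm{A+B+D}_{F}^{2}\le 3(\norm{A}_{F}^{2}+\norm{B}_{F}^{2}+\norm{D}_{F}^{2})$ to this three-way split immediately produces the factor $3$ appearing in every term of the claim.

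Next I would bound each of the three pieces. For the first piece, Lemma~\ref{Norm_R_C}(1) (applied column-wise) gives $\norm{\Pi_{C}\vY^{k}}_{F}\le \norm{\Pi_{C}\vY^{k}}_{C}$, matching the first summand. For the third piece, since $\vu_{C}$ is a column vector and $\nabla f(\oX^{k})^{\T}$ a row vector, $\norm{\vu_{C}\nabla f(\oX^{k})^{\T}}_{F}=\norm{\vu_{C}}\cdot\norm{\nabla f(\oX^{k})}$, which gives the third summand exactly. The middle piece requires the Lipschitz assumption: since $\oY^{k}-\nabla f(\oX^{k})^{\T}=\tfrac{1}{n}\sum_{i=1}^{n}\bigl(\nabla f_{i}(\vx_{i}^{k})-\nabla f_{i}(\oX^{k\T})\bigr)^{\T}$, Jensen (or Cauchy--Schwarz) together with Assumption~\ref{Assumption: Lipschitz} yields
\begin{equation*}
\norm{\oY^{k}-\nabla f(\oX^{k})^{\T}}^{2}
\le \frac{1}{n}\sum_{i=1}^{n}L^{2}\norm{\vx_{i}^{k}-\oX^{k\T}}^{2}
=\frac{L^{2}}{n}\norm{\Pi_{R}\vX^{k}}_{F}^{2}
\le \frac{L^{2}}{n}\norm{\Pi_{R}\vX^{k}}_{R}^{2},
\end{equation*}
where the last step again uses Lemma~\ref{Norm_R_C}(1). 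Multiplying by $\norm{\vu_{C}}^{2}$ and combining the three bounds delivers the inequality verbatim.

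I do not anticipate a genuine obstacle here; the work is essentially bookkeeping. The only subtlety is dimensional, making sure the row/column conventions for $\oX^{k}$, $\oY^{k}$, and $\nabla f(\oX^{k})$ are consistent so that the identity $\vY^{k}=\Pi_{C}\vY^{k}+\vu_{C}\oY^{k}$ is literally correct, and that the matrix norms $\norm{\cdot}_{R}$, $\norm{\cdot}_{C}$ dominate the Frobenius norm column-wise. Once these are pinned down, the three-way decomposition plus Lipschitz continuity closes the argument in a few lines.
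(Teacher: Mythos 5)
Your proposal is correct and follows essentially the same route as the paper: the identical three-way decomposition $\vY^{k}=\Pi_{C}\vY^{k}+\vu_{C}(\oY^{k}-\nabla f(\oX^{k}))+\vu_{C}\nabla f(\oX^{k})$, the factor-of-3 inequality, the norm dominations from Lemma~\ref{Norm_R_C}, and the Lipschitz bound $\norm{\nabla f(\oX^{k})-\oY^{k}}\le \frac{L}{\sqrt{n}}\norm{\Pi_{R}\vX^{k}}_{R}$ via the tracking identity $\oY^{k}=\frac{1}{n}\vone^{\T}\nabla\vF(\vX^{k})$. No gaps; the bookkeeping on row/column conventions is exactly what the paper does.
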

	\begin{proof}
		See Appendix \ref{Pf:Yk}.
	\end{proof}
	\begin{remark}
		The results in Lemma \ref{Norm_R_C} are mainly used for handling the mixing matrices. Lemma \ref{Lem:Yk} bounds the gradient trackers to simplify the derivation.
	\end{remark}
	The following lemma introduces the key linear system of inequalities based on the previously established supporting lemmas. 
	\begin{lemma}\label{Lem:RCPP}
		Suppose Assumptions \ref{Assumption: network}, \ref{Assumption: Lipschitz} and \ref{Assumption:General} hold 
		and   $\hlambda\leq \min\big\{\frac{1}{6L},\frac{1}{6\sqrt{C}L}\big\}$. 
		Then we have
		\begin{align}\label{ineq:LMI}
			\vw^{k+1}\leq \vA \vw^{k} + \vb\norm{\vY^{k}}_{F}^{2}+\vzeta^{k},
		\end{align}
		where 
		$
		\vw^k =	\big[ 
		\Omega_{c}^{k}, 
		\Omega_{g}^{k}, 
		\Omega_{cx}^{k}, 
		\Omega_{cy}^{k}\big]^{\T},
		$ 
		$\vzeta^{k}=s_k^{2}\cdot\big[\zeta_{c},\zeta_{g},\zeta_{cx},\zeta_{cy}\big]^\T$. The scalars $\zeta_{c},\zeta_{g},\zeta_{cx},\zeta_{cy}$, which relate to the absolute compression errors, along with  the elements of the matrix $\vA$ and the vector $\vb$, are the corresponding coefficients  of the inequalities in Appendix \ref{Pf:LMI}.
	\end{lemma}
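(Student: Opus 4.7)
The plan is to derive the four scalar inequalities that make up the linear system \eqref{ineq:LMI} one row at a time, and then collect them into matrix form. Throughout, the main tools are Lemma~\ref{Norm_R_C} (for contractive properties of $\Pi_R\vR_\gamma$ and $\Pi_C\vC_\gamma$ in the $\|\cdot\|_R,\|\cdot\|_C$ norms), the dynamic scaling bounds derived in Section~\ref{Sec: Compression} (which give an $s_k^2\sigma^2$ / $s_k^2\sigma_r^2$ contribution from the absolute error), Young's inequality to split cross terms, and the $L$-smoothness of $\nabla f_i$. The PL condition is \emph{not} used here.

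For the consensus row $\Omega_c^{k+1}$, I would first rewrite Line~8 as $\vX^{k+1}=\vR_\gamma\tX^k-\gamma_x(\vI-\vR)(\widehat{\vX}^k-\tX^k)$, apply $\Pi_R$, and take $\|\cdot\|_R$. The main piece $\Pi_R\vR_\gamma\tX^k$ contracts by $1-\theta_R\gamma_x$ by Lemma~\ref{Norm_R_C}(2), while $\tX^k=\vX^k-\Lambda\vY^k$ splits into the consensus direction (bounded by $\Omega_c^k$) and a $\Lambda\vY^k$ piece (bounded by $\hat\lambda^2\|\vY^k\|_F^2$, feeding the $\vb\|\vY^k\|_F^2$ column). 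The residual $\widehat{\vX}^k-\tX^k=\vQ_x^k-(\tX^k-\vH_x^k)$ is exactly the one-step dynamic-scaling compression error, giving by \eqref{def:Ncompressor4} a bound of $C\,\Omega_{cx}^k+s_k^2\sigma^2$ after taking conditional expectation. Young's inequality with a parameter tuned to $\theta_R\gamma_x$ merges the two pieces into an inequality of the desired form $\Omega_c^{k+1}\le A_{11}\Omega_c^k+A_{13}\Omega_{cx}^k+b_1\|\vY^k\|_F^2+s_k^2\zeta_c$.

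For the gradient-tracking row $\Omega_g^{k+1}$, the same decomposition applied to Line~15 gives $\vY^{k+1}=\vC_\gamma\tY^k-\gamma_y(\vI-\vC)(\widehat{\vY}^k-\tY^k)$. Applying $\Pi_C$ and $\|\cdot\|_C$, the main term contracts by $1-\theta_C\gamma_y$, and the residual is controlled by $C\,\Omega_{cy}^k+s_k^2\sigma^2$. The extra complication is that $\tY^k=\vY^k+\nabla\vF(\vX^{k+1})-\nabla\vF(\vX^k)$, so I need to bound $\|\nabla\vF(\vX^{k+1})-\nabla\vF(\vX^k)\|_F$ via $L$-smoothness by $L\|\vX^{k+1}-\vX^k\|_F$, then use \eqref{Xkplus} together with $\|\vI-\vR\|_R\le 3$ from Lemma~\ref{Norm_R_C}(3) to expose $\Omega_c^k$, $\Omega_{cx}^k$, and $\hat\lambda^2\|\vY^k\|_F^2$. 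This is where the hypothesis $\hat\lambda\le 1/(6L)$ and $\hat\lambda\le 1/(6\sqrt{C}L)$ is consumed: the smoothness-induced factor $\hat\lambda L$ has to be small enough to keep the coefficient of $\Omega_g^k$ strictly below one after Young's inequality. This coupling is the step I expect to require the most careful bookkeeping.

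For the two compression rows, I would expand $\tX^{k+1}-\vH_x^{k+1}$ using Lines~2 and 6 of Algorithm~\ref{Alg:RCPPe}. Writing $\vH_x^{k+1}=\vH_x^k+\alpha_x(\vH_x^k+\vQ_x^k-\vH_x^k)$, a standard identity gives $\tX^{k+1}-\vH_x^{k+1}=(\tX^{k+1}-\tX^k)+\bigl((1-\alpha_x)(\tX^k-\vH_x^k)-\alpha_x(\vQ_x^k-(\tX^k-\vH_x^k))\bigr)$. The first piece I bound by the one-step change $\|\tX^{k+1}-\tX^k\|_F^2$, which opens up via \eqref{Xkplus} into $\Omega_c^k,\Omega_{cx}^k$ and $\hat\lambda^2\|\vY^k\|_F^2$. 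For the bracket, taking conditional expectation and invoking \eqref{def:contract4} with the $r$-scaling choice (with parameter $\alpha_x$ absorbed into $\delta$) yields a $(1-\delta')\Omega_{cx}^k+s_k^2\sigma_r^2$ bound after Young's inequality, where $\alpha_x$ is chosen so the aggregate coefficient of $\Omega_{cx}^k$ is below one. The $\Omega_{cy}^{k+1}$ row is analogous but in addition must absorb the gradient difference $\nabla\vF(\vX^{k+1})-\nabla\vF(\vX^k)$ appearing through $\tY^{k+1}-\tY^k$, producing cross-dependence on $\Omega_c^k$ and $\|\vY^k\|_F^2$ via $L$-smoothness and \eqref{Xkplus}. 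Assembling the four inequalities in $(\Omega_c,\Omega_g,\Omega_{cx},\Omega_{cy})$ coordinates yields \eqref{ineq:LMI}; the coefficient expressions for $\vA$, $\vb$, and the $\vzeta$ components are then read off and recorded in Appendix~\ref{Pf:LMI}.
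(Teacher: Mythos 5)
Your proposal follows essentially the same route as the paper's proof: the same decompositions of $\Pi_{R}\vX^{k+1}$, $\Pi_{C}\vY^{k+1}$, $\tX^{k+1}-\vH_{x}^{k+1}$ and $\vY^{k+1}-\vH_{y}^{k+1}$, the same contraction via Lemma~\ref{Norm_R_C}, the same conditional-expectation compression bounds yielding the $C\,\Omega_{cx}^{k}+s_k^2\sigma^2$ and $(1-\alpha r\delta)$-type terms, and the same Young-type splits tuned to $\theta_{R}\gamma_x$, $\theta_{C}\gamma_y$, $\alpha_x r\delta$, $\alpha_y r\delta$. Two bookkeeping points to watch in execution: the hypothesis $\hlambda\leq\min\{\tfrac{1}{6L},\tfrac{1}{6\sqrt{C}L}\}$ is actually consumed in the third inequality (to get $2+6L^2\hlambda^2\norm{\vC_{\gamma}}_{C}^{2}+12CL^2\hlambda^2\norm{\vI-\vC}_{C}^{2}\gamma_{y}^2\leq 4$ and in the constant $\zeta_{cx}$) rather than in the $\Omega_{g}$ row, and since $\tX^{k+1}-\tX^{k}=\vX^{k+1}-\vX^{k}-\Lambda(\vY^{k+1}-\vY^{k})$, the $\Omega_{cx}^{k+1}$ row also inherits $\Omega_{g}^{k}$ and $\Omega_{cy}^{k}$ cross-terms through the bound on $\EE[\norm{\vY^{k+1}-\vY^{k}}_{F}^{2}\vert\cF^{k}]$, which your sketch omits.
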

	\begin{proof}
		See Appendix \ref{Pf:LMI}.
	\end{proof}
	\begin{remark}
		The absolute compression errors are included in $\vzeta^{k}$ and can be controlled by $s_k^{2}$. The linear system of inequalities   is the key for the convergence analysis which establishes the relationship among the consensus error, gradient tracking error and compression errors.
	\end{remark}
	The following descent lemma comes from the smoothness of the gradients and will be used for proving the main result together with Lemma \ref{Lem:RCPP}.
	\begin{lemma}\label{Lem:descent}
		Suppose Assumption \ref{Assumption: Lipschitz} ,  $\overline{\lambda}\leq\frac{1}{L}$ and $\overline{\lambda}\geq M \hlambda$ for some $M>0$ hold, we have
		\begin{equation}\label{descent}
			\begin{aligned}
				&f(\oX^{k+1})\leq f(\oX^{k})-\frac{M\hlambda}{2}   \norm{\nabla f(\oX^{k})}^2 \\
				&\qquad+E_1M\hlambda  L^2\norm{\Pi_{R}\vX^{k}}_{R}^{2}
				+E_2M\hlambda\norm{\Pi_{C}\vY^{k}}_{C}^{2}, 
			\end{aligned}
		\end{equation}
		where  $E_1=\frac{\norm{\vu_{R}} \norm{\vu_{C}}}{n^2M}$ and $E_2=\frac{\norm{\vu_{R}}^{2}}{n^2 M^2}$. 
	\end{lemma}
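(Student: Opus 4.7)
The plan is to derive a simple recursion for the weighted average $\oX^{k+1}$, apply the $L$-smoothness descent lemma, and then decompose the linear and quadratic terms so as to expose the descent $-\tfrac{M\hlambda}{2}\norm{\nabla f(\oX^k)}^{2}$ together with error contributions in $\norm{\Pi_{R}\vX^{k}}_{R}^{2}$ and $\norm{\Pi_{C}\vY^{k}}_{C}^{2}$.

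First, I would left-multiply Line~8 of Algorithm~\ref{Alg:RCPPe} by $\vu_{R}^{\T}/n$. Since $\vu_{R}^{\T}(\vI-\vR)=\vzero^{\T}$ (left-eigenvector property), the consensus correction cancels and the average collapses to $\oX^{k+1}=\oX^{k}-\tfrac{1}{n}\vu_{R}^{\T}\Lambda\vY^{k}$. Writing $\vY^{k}=\vu_{C}\oY^{k}+\Pi_{C}\vY^{k}$ and using $\olambda=\tfrac{1}{n}\vu_{R}^{\T}\Lambda\vu_{C}$ then yields
\begin{equation*}
\oX^{k+1}-\oX^{k}=-\olambda\,\oY^{k}-\tfrac{1}{n}\vu_{R}^{\T}\Lambda\Pi_{C}\vY^{k},
\end{equation*}
which cleanly separates the mean-gradient step from the gradient-tracking deviation.

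Next, I would apply $L$-smoothness of $f$, giving $f(\oX^{k+1})\le f(\oX^{k})+\langle\nabla f(\oX^{k}),\oX^{k+1}-\oX^{k}\rangle+\tfrac{L}{2}\norm{\oX^{k+1}-\oX^{k}}^{2}$, and bound each piece. For the linear term, the decomposition $\oY^{k}=\nabla f(\oX^{k})+(\oY^{k}-\nabla f(\oX^{k}))$ produces the descent $-\olambda\norm{\nabla f(\oX^{k})}^{2}$ together with cross-terms, where $\norm{\oY^{k}-\nabla f(\oX^{k})}^{2}\le \tfrac{L^{2}}{n}\norm{\Pi_{R}\vX^{k}}_{R}^{2}$ follows from Lipschitz continuity of each $\nabla f_{i}$ and Cauchy--Schwarz. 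The remaining cross-term $-\tfrac{1}{n}\langle\nabla f(\oX^{k}),\vu_{R}^{\T}\Lambda\Pi_{C}\vY^{k}\rangle$ is handled by Young's inequality using $\norm{\vu_{R}^{\T}\Lambda\Pi_{C}\vY^{k}}^{2}\le \norm{\vu_{R}}^{2}\hlambda^{2}\norm{\Pi_{C}\vY^{k}}_{C}^{2}$, with parameter tuned so that its $\norm{\nabla f(\oX^{k})}^{2}$ residue is $O(\olambda)$ and its $\norm{\Pi_{C}\vY^{k}}_{C}^{2}$ coefficient scales like $\norm{\vu_{R}}^{2}\hlambda^{2}/(\olambda n^{2})$. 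For the quadratic term, Cauchy--Schwarz gives $\norm{\oX^{k+1}-\oX^{k}}^{2}\le \tfrac{\norm{\vu_{R}}^{2}\hlambda^{2}}{n^{2}}\norm{\vY^{k}}_{F}^{2}$, and substituting Lemma~\ref{Lem:Yk} contributes $O(L\hlambda^{2})$ multiples of the three error quantities $\norm{\nabla f(\oX^{k})}^{2}$, $\norm{\Pi_{R}\vX^{k}}_{R}^{2}$, and $\norm{\Pi_{C}\vY^{k}}_{C}^{2}$.

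Finally, I would collect the contributions. The step-size bound $\olambda\le 1/L$ together with $\hlambda\le\olambda$ implies $L\hlambda^{2}\le \hlambda$, so the $O(L\hlambda^{2})$ residue on $\norm{\nabla f(\oX^{k})}^{2}$ from the quadratic term is absorbed into the descent $-\Theta(\olambda)\norm{\nabla f(\oX^{k})}^{2}$; invoking $\olambda\ge M\hlambda$ then sharpens this to $-\tfrac{M\hlambda}{2}\norm{\nabla f(\oX^{k})}^{2}$ and simultaneously converts the $1/\olambda$ factor on the $\norm{\Pi_{C}\vY^{k}}_{C}^{2}$ term into the $1/M$ that appears inside $E_{2}M\hlambda=\norm{\vu_{R}}^{2}\hlambda/(n^{2}M)$. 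For the $\norm{\Pi_{R}\vX^{k}}_{R}^{2}$ coefficient, the elementary Cauchy--Schwarz estimate $\olambda=\tfrac{1}{n}\vu_{R}^{\T}\Lambda\vu_{C}\le \tfrac{\norm{\vu_{R}}\norm{\vu_{C}}\hlambda}{n}$ transforms the $\olambda L^{2}/n$ prefactor into exactly $E_{1}M\hlambda L^{2}=\norm{\vu_{R}}\norm{\vu_{C}}\hlambda L^{2}/n^{2}$. The main obstacle will be the balancing in the Young's inequality applied to the $\Pi_{C}\vY^{k}$ cross-term: the $1/M^{2}$ inside $E_{2}$ (so $E_{2}M\hlambda$ carries an unavoidable $1/M$) is structurally forced because a smaller ratio $M=\olambda/\hlambda$ makes the tracking deviation proportionally harder to dominate, and the Young's parameter must be tuned to surface this $1/M$ cleanly while still leaving enough margin that the net coefficient of $\norm{\nabla f(\oX^{k})}^{2}$ remains at most $-M\hlambda/2$.
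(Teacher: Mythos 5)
Your overall skeleton (averaging Line~8 with $\vu_{R}^{\T}/n$, the decomposition $\vY^{k}=\vu_{C}\oY^{k}+\Pi_{C}\vY^{k}$, the bound $\norm{\oY^{k}-\nabla f(\oX^{k})}\le \tfrac{L}{\sqrt{n}}\norm{\Pi_{R}\vX^{k}}_{R}$, and the final conversions via $\olambda\ge M\hlambda$ and $\olambda\le\tfrac{1}{n}\norm{\vu_{R}}\norm{\vu_{C}}\hlambda$) matches the paper. The genuine gap is in how you treat the smoothness quadratic term $\tfrac{L}{2}\norm{\oX^{k+1}-\oX^{k}}^{2}$. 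You bound it by $\tfrac{L\hlambda^{2}\norm{\vu_{R}}^{2}}{2n^{2}}\norm{\vY^{k}}_{F}^{2}$ and invoke Lemma~\ref{Lem:Yk}, which injects a \emph{positive} term of order $L\hlambda^{2}\norm{\vu_{R}}^{2}\norm{\vu_{C}}^{2}n^{-2}\norm{\nabla f(\oX^{k})}^{2}$. Meanwhile your two Young steps on the linear term already consume the entire margin between $-\olambda$ and $-\tfrac{\olambda}{2}$ (and hence between $-\olambda$ and $-\tfrac{M\hlambda}{2}$, since $\olambda\ge M\hlambda$ may hold with equality). There is nothing left to absorb the extra positive residue: the hypotheses of the lemma are only $\olambda\le 1/L$ and $\olambda\ge M\hlambda$, and your claim that $\hlambda\le\olambda$ is unjustified (it would require $M\ge 1$, which is not assumed; in general one only has $\olambda\le\tfrac{1}{n}\norm{\vu_{R}}\norm{\vu_{C}}\hlambda$). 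To close this the way you propose, you would need either an additional step-size restriction of the form $\hlambda\lesssim M n^{2}/(L\norm{\vu_{R}}^{2}\norm{\vu_{C}}^{2})$ or looser Young parameters that inflate the coefficients beyond the stated $E_{1}M\hlambda L^{2}$ and $E_{2}M\hlambda$ — either way you do not recover the lemma as stated.

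The paper avoids this entirely by never expanding $\norm{\vY^{k}}_{F}^{2}$ here. It writes the linear term as $-\olambda\langle\nabla f(\oX^{k}),\tfrac{1}{n\olambda}\vu_{R}^{\T}\Lambda\vY^{k}\rangle$ and applies the polarization identity $\langle\va,\vb\rangle=\tfrac{1}{2}(\norm{\va}^{2}+\norm{\vb}^{2}-\norm{\va-\vb}^{2})$, which produces a \emph{negative} term $-\tfrac{\olambda}{2}\norm{\tfrac{1}{n\olambda}\vu_{R}^{\T}\Lambda\vY^{k}}^{2}$ that exactly cancels the smoothness quadratic term $\tfrac{L\olambda^{2}}{2}\norm{\tfrac{1}{n\olambda}\vu_{R}^{\T}\Lambda\vY^{k}}^{2}$ once $\olambda\le 1/L$. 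The only surviving cross term is $\tfrac{\olambda}{2}\norm{\nabla f(\oX^{k})-\oY^{k}-\tfrac{1}{n\olambda}\vu_{R}^{\T}\Lambda\Pi_{C}\vY^{k}}^{2}$, which splits cleanly into the two error terms with coefficients $\tfrac{\olambda L^{2}}{n}$ and $\tfrac{\norm{\vu_{R}}^{2}\hlambda^{2}}{n^{2}\olambda}$ and leaves the coefficient of $\norm{\nabla f(\oX^{k})}^{2}$ at exactly $-\tfrac{\olambda}{2}\le-\tfrac{M\hlambda}{2}$, with no positive residue to absorb. You should replace your Young-plus-Lemma~\ref{Lem:Yk} treatment with this cancellation.
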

	\begin{proof}
		See Appendix \ref{Pf:descent}.
	\end{proof}
	Based on the preceding two lemmas, we provide below a key lemma for proving the main convergence theorems.
	\begin{lemma}\label{Lem:Key}
		Suppose Assumptions \ref{Assumption: network}, \ref{Assumption: Lipschitz} and \ref{Assumption:General} hold, the scaling parameters $\alpha_x, \alpha_y \in (0, \frac{1}{r}]$, $\olambda\geq M \hlambda$ for some $M>0$, 
		and the consensus step-sizes $\gamma_x,\gamma_y$ and the maximum step-size $\hlambda$ satisfy
		\begin{align*}
			&\hlambda \leq \Big\{\frac{1}{6},\frac{1}{6\sqrt{C}},\frac{1}{M},
			\frac{\theta_{C}}{\sqrt{54e_1 } },
			\frac{e_3\gamma_{x}}{\sqrt{96E}\norm{\vu_{C}}},
			\frac{e_4\theta_{C}\gamma_{y}}{\sqrt{48e_1}}
			\Big\}\frac{1}{L},\\
			&\gamma_{x} \leq \Big\{1, 
			\frac{e_5\alpha_x r \delta}{\sqrt{C}},
			\frac{\sqrt{M\norm{\vu_{C}}}}{\sqrt{108\norm{\vu_{R}}e_1 }}\theta_{C}\gamma_y
			\Big\},\\
			&\gamma_{y} \leq \Big\{1, 
			\frac{\theta_{C}e_2}{432 e_1},
			\frac{\theta_{C}(\alpha_y r \delta)^2}{432 e_2 C  },
			\frac{\alpha_y r \delta }{\sqrt{1728 C }}
			\Big\}.
		\end{align*}
		Then we have
		\begin{align}\label{ineq:Key}
			\nonumber	&\Omega_{o}^{k+1}+\frac{E M\hlambda}{\beta} V^{k+1}
			\leq \Omega_{o}^{k}-\frac{M\hlambda}{4}   \norm{\nabla f(\oX^{k})}^2\\
			\nonumber	&\quad+\frac{E M\hlambda}{\beta}\Big[
			\Big( 1-\frac{3\theta_{R}\gamma_x}{32}\Big) L^2 \Omega_{c}^{k}
			+\Big( 1-\frac{\theta_{C}\gamma_y }{8} \Big) A\Omega_{g}^{k}\\
			&+\Big(1-\frac{\alpha_x r \delta}{4}\Big)B\Omega_{cx}^{k}
			+\Big(1-\frac{\alpha_y r \delta}{16}
			\Big)D\Omega_{cy}^{k}
			\Big]
			+s_k^{2}\tilde{\zeta_0},
		\end{align}
		where we denote 
			$$\beta=\frac{\theta_{R}\gamma_{x}}{8},V^{k}=L^2\Omega_{c}^{k}+A\Omega_{g}^{k}+B\Omega_{cx}^{k}+D\Omega_{cy}^{k},$$ 
					$$\tilde{\zeta_0}=\zeta_0\frac{E M\hlambda}{\beta}, \zeta_0=L^2\zeta_{c}+A\zeta_{g}+B\zeta_{cx}+D\zeta_{cy}$$
					for simplicity, and the other parameters are given by
					$$A=\frac{\theta_{C}\gamma_{y}\theta_{R}}{108 e_1\gamma_{x}},   
					B=\frac{L^2 \alpha_x r \delta \theta_{R}}{1296  \gamma_x},$$ 
					$$D=\frac{\theta_{C}\gamma_{y} \alpha_y r \delta \theta_{R}}{108 e_2 \gamma_x}\leq\frac{  \alpha_y r \delta \theta_{R}}{108 e_2  \gamma_x},    
					E=\frac{\norm{\vu_{R}} \norm{\vu_{C}}}{n^2M},$$  
					$$e_1=2\delta_{C,2}^2(1+18C),   
					e_2=108C+112,$$  
					$$e_3=\min\Big\{
					\frac{1}{2\delta_{R,2}}\frac{\theta_{R}}{1-\theta_{R}\gamma_{x}},1
					\Big\},$$  
					$$e_4=\min\Big\{
					\frac{\theta_{R}}{36(1-\theta_{R}\gamma_{x})},
					\frac{1}{ 3\sqrt{ 3}}
					\Big\},$$ 
					$$e_5=\min\Big\{\frac{\theta_{R} }{432\sqrt{2}\delta_{R,2}},
					\frac{1}{72}
					\Big\}.$$ 
	\end{lemma}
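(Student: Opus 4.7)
The plan is to combine the descent inequality from Lemma~\ref{Lem:descent} with a weighted sum of the four coordinate inequalities packed inside the linear matrix inequality~\eqref{ineq:LMI} of Lemma~\ref{Lem:RCPP}. The Lyapunov function $V^{k}=L^{2}\Omega_{c}^{k}+A\Omega_{g}^{k}+B\Omega_{cx}^{k}+D\Omega_{cy}^{k}$ is the natural candidate, and the coefficients $A,B,D$ (together with the prefactor $EM\hlambda/\beta$) are precisely those chosen to make the off-diagonal couplings in $\vA$ small enough to be absorbed. The main obstacle will be bookkeeping: every contraction factor such as $1-\theta_{R}\gamma_{x}/4$, $1-\theta_{C}\gamma_{y}/4$, $1-\alpha_{x}r\delta/2$, $1-\alpha_{y}r\delta/2$ on the diagonal of $\vA$ loses a constant share to pay for the cross terms pushed in by the other three rows, and each upper-bound on $\hlambda,\gamma_{x},\gamma_{y}$ in the statement corresponds to one such absorption.

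First, I would take conditional expectation in Lemma~\ref{Lem:descent} and rewrite it, under $\olambda\ge M\hlambda$ and $\olambda\le 1/L$, as
\begin{equation*}
\Omega_{o}^{k+1}\leq \Omega_{o}^{k}-\tfrac{M\hlambda}{2}\EE\bigl[\norm{\nabla f(\oX^{k})}^{2}\bigr]
+E_{1}M\hlambda L^{2}\Omega_{c}^{k}+E_{2}M\hlambda\Omega_{g}^{k},
\end{equation*}
so that the negative gradient term is the only good direction available. Next I would take the weighted sum
$L^{2}\cdot(\text{row 1}) + A\cdot(\text{row 2}) + B\cdot(\text{row 3}) + D\cdot(\text{row 4})$
of the LMI~\eqref{ineq:LMI} to obtain
\begin{equation*}
V^{k+1}\leq (\vA^{\T}[L^{2},A,B,D]^{\T})^{\T}\vw^{k}+\bigl(L^{2}b_{1}+Ab_{2}+Bb_{3}+Db_{4}\bigr)\norm{\vY^{k}}_{F}^{2}+s_{k}^{2}\zeta_{0}.
\end{equation*}
With the specific choices of $A,B,D$ given in the statement, the step-size caps on $\gamma_{x},\gamma_{y}$ are precisely what is needed so that each diagonal contraction factor keeps at least the prescribed share: $1-\theta_{R}\gamma_{x}/4$ loses $\theta_{R}\gamma_{x}/32$ to the cross terms, $1-\theta_{C}\gamma_{y}/4$ loses $\theta_{C}\gamma_{y}/8$, $1-\alpha_{x}r\delta/2$ loses $\alpha_{x}r\delta/4$, and $1-\alpha_{y}r\delta/2$ loses $\alpha_{y}r\delta/16$. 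A tedious but routine verification, in which the bounds $\hlambda\le e_{3}\gamma_{x}/(\sqrt{96E}\norm{\vu_{C}}L)$, $\gamma_{x}\le e_{5}\alpha_{x}r\delta/\sqrt{C}$, and $\gamma_{y}\le \theta_{C}(\alpha_{y}r\delta)^{2}/(432 e_{2}C)$ are used one-for-one, produces exactly the four contraction factors in the statement.

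Then I would handle the $\norm{\vY^{k}}_{F}^{2}$ term by invoking Lemma~\ref{Lem:Yk}, which replaces it by $3\norm{\Pi_{C}\vY^{k}}_{C}^{2}+3\norm{\vu_{C}}^{2}L^{2}\norm{\Pi_{R}\vX^{k}}_{R}^{2}/n+3\norm{\vu_{C}}^{2}\norm{\nabla f(\oX^{k})}^{2}$ (in expectation). The first two pieces feed back into $L^{2}\Omega_{c}^{k}$ and $A\Omega_{g}^{k}$ and, under the step-size caps $\hlambda\le \theta_{C}/\sqrt{54 e_{1}}\cdot L^{-1}$ and $\hlambda\le e_{4}\theta_{C}\gamma_{y}/\sqrt{48 e_{1}}\cdot L^{-1}$, contribute only second-order losses that stay within the margins reserved above. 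The third piece is the only source of a positive $\norm{\nabla f(\oX^{k})}^{2}$ contribution; after multiplying the $V$-inequality by $EM\hlambda/\beta$ and adding it to the descent inequality, its coefficient is bounded by $3\norm{\vu_{C}}^{2}(L^{2}b_{1}+\cdots) EM\hlambda/\beta$, which the bound $\hlambda\le e_{3}\gamma_{x}/(\sqrt{96 E}\norm{\vu_{C}}L)$ (equivalently a bound on $EM\hlambda/\beta$) controls by $M\hlambda/4$. Subtracting it from the existing $-M\hlambda/2\,\norm{\nabla f(\oX^{k})}^{2}$ leaves precisely $-M\hlambda/4\,\norm{\nabla f(\oX^{k})}^{2}$, as required.

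Finally, collecting the terms gives~\eqref{ineq:Key}, with the absolute-error residual
$s_{k}^{2}\tilde{\zeta}_{0}=s_{k}^{2}(EM\hlambda/\beta)\zeta_{0}$
arising solely from the $s_{k}^{2}\vzeta^{k}$ piece of the LMI (the descent inequality contributes no absolute-error term). The hard part of the argument is not any single inequality, but the combinatorial matching between the seven numerical constants $e_{1},\ldots,e_{5},A,B,D$ in the statement and the seven absorption budgets used along the way; choosing $\beta=\theta_{R}\gamma_{x}/8$ is the natural normalization that makes the consensus row contract at the same rate at which the Lyapunov weight $EM\hlambda/\beta$ appears, so that the announced bound becomes tight up to the constants declared in the lemma.
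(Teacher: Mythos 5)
Your plan matches the paper's proof essentially step for step: form $V^{k}$ as the weighted sum $L^{2}\cdot(\text{row 1})+A\cdot(\text{row 2})+B\cdot(\text{row 3})+D\cdot(\text{row 4})$ of the linear system from Lemma~\ref{Lem:RCPP}, substitute Lemma~\ref{Lem:Yk} for $\norm{\vY^{k}}_{F}^{2}$, absorb the cross terms and the $3\norm{\vu_{C}}^{2}\norm{\nabla f(\oX^{k})}^{2}$ contribution (bounded by $\beta/(4E)$ so that it costs exactly $M\hlambda/4$ after rescaling) using the step-size caps, and then add $EM\hlambda/\beta$ times the result to the descent inequality of Lemma~\ref{Lem:descent}. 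The only caveat is that some of the diagonal contraction factors and loss budgets you quote (e.g., $1-\theta_{R}\gamma_{x}/4$ and $1-\alpha_{y}r\delta/2$ versus the actual $1-\theta_{R}\gamma_{x}$ and $1-\alpha_{y}r\delta/4$) do not match the paper's arithmetic, but these sit inside the verification you explicitly defer and do not change the structure of the argument.
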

	\begin{proof}
		See Appendix \ref{Pf:Key}.
	\end{proof}
	\subsection{General nonconvex case: sublinear convergence}
	Define a Lyapunov function $\Omega^{k}=\Omega_{o}^{k}+\frac{E M\hlambda}{\beta} V^{k}$. Based on the previous lemmas, we first demonstrate the following  $\mathcal{O}\left(\frac{1}{K}\right)$  convergence rate for RCPP under smooth nonconvex objective functions.  
	\begin{theorem}\label{Thm:RCPP-NCVX}
		Suppose Assumptions \ref{Assumption: network}, \ref{Assumption: Lipschitz} and \ref{Assumption:General} hold, the scaling parameters $\alpha_x, \alpha_y \in (0, \frac{1}{r}]$, $\olambda\geq M \hlambda$ for some $M>0$, $s_k^2=a_0a^k$,  $a\in(0,1),a_0>0$, 	and the consensus step-sizes $\gamma_x,\gamma_y$ and the maximum step-size $\hlambda$ satisfy the same condition as in Lemma \ref{Lem:Key}. 
		Then, it holds that
		\begin{align}\label{sublinear}
			\frac{1}{K}\sum_{k=0}^{K-1}\mathbb{E}\Big[\norm{\nabla f(\oX^{k})}^2\Big]\leq \frac{4\Omega^{0}}{M \hlambda K}+\frac{4E \tilde{c}\zeta_0 }{\beta K},
		\end{align}
		where $\tilde{c}=\frac{a_0}{1-a}$.
	\end{theorem}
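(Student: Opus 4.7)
The plan is to turn the coupled inequality of Lemma~\ref{Lem:Key} into a one-step descent on the Lyapunov function $\Omega^k=\Omega_o^k+\tfrac{EM\hlambda}{\beta}V^k$, telescope over $k=0,\ldots,K-1$, and invoke summability of the scaling schedule $\{s_k^2\}$.

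The first step is to observe that every bracketed coefficient on the right-hand side of \eqref{ineq:Key} lies in $(0,1]$: since $\theta_R,\theta_C,\delta\in(0,1]$, $\gamma_x,\gamma_y\in(0,1]$, and the hypothesis $\alpha_x,\alpha_y\in(0,1/r]$ gives $\alpha_x r,\alpha_y r\le 1$, each of $1-\tfrac{3\theta_R\gamma_x}{32}$, $1-\tfrac{\theta_C\gamma_y}{8}$, $1-\tfrac{\alpha_x r\delta}{4}$, $1-\tfrac{\alpha_y r\delta}{16}$ is at most $1$. Consequently the weighted sum in the bracket is upper bounded by $V^k=L^2\Omega_c^k+A\Omega_g^k+B\Omega_{cx}^k+D\Omega_{cy}^k$, and \eqref{ineq:Key} collapses to the clean descent
\begin{align*}
\Omega^{k+1}\ \le\ \Omega^k - \tfrac{M\hlambda}{4}\EE\bigl[\norm{\nabla f(\oX^k)}^2\bigr] + s_k^2\tilde\zeta_0.
\end{align*}

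The second step is to telescope and use non-negativity. Since $\vx^*$ is a global minimizer, $\Omega_o^K=\EE[f(\oX^K)-f(\vX^*)]\ge 0$, and every summand of $V^K$ is non-negative, so $\Omega^K\ge 0$. Summing the displayed descent from $k=0$ to $K-1$ and rearranging yields
\begin{align*}
\tfrac{M\hlambda}{4}\sum_{k=0}^{K-1}\EE\bigl[\norm{\nabla f(\oX^k)}^2\bigr]\ \le\ \Omega^0 + \tilde\zeta_0\sum_{k=0}^{K-1}s_k^2.
\end{align*}
With $s_k^2=a_0 a^k$ and $a\in(0,1)$, the geometric series gives $\sum_{k=0}^{K-1}s_k^2\le \tfrac{a_0}{1-a}=\tilde c$. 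Substituting $\tilde\zeta_0=\zeta_0\tfrac{EM\hlambda}{\beta}$ and dividing by $\tfrac{M\hlambda K}{4}$ recovers \eqref{sublinear} exactly.

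The genuine difficulty has already been absorbed into Lemma~\ref{Lem:Key}, where the step-size, consensus-parameter, and scaling conditions are calibrated so that every coefficient of $V^k$ in the bracket is contractive rather than expansive. After that, the theorem is essentially a one-line Lyapunov descent plus a summable-noise argument, so I do not anticipate a separate obstacle; the only care needed is in recording that any summable sequence $\{s_k^2\}$ would yield an $\mathcal{O}(1/K)$ rate, with geometric decay being a convenient realization that keeps the additive constant $\tilde c$ explicit.
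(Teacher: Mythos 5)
Your proposal is correct and follows essentially the same route as the paper's own proof: bound each bracketed contraction factor in Lemma~\ref{Lem:Key} by $1$ to get the one-step Lyapunov descent $\Omega^{k+1}\le\Omega^{k}-\tfrac{M\hlambda}{4}\EE[\norm{\nabla f(\oX^{k})}^2]+s_k^2\tilde{\zeta_0}$, telescope, bound the geometric sum by $\tilde{c}=\tfrac{a_0}{1-a}$, and divide by $\tfrac{M\hlambda K}{4}$. The only (harmless) addition is your explicit justification that $\Omega^{K}\ge 0$, which the paper uses implicitly when dropping $-\Omega^{K}$.
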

	\begin{proof}
		See Appendix \ref{Pf:RCPP-NCVX}.
	\end{proof}
	\begin{remark}
		From Theorem \ref{Thm:RCPP-NCVX}, the convergence rate of RCPP is affected by the compression errors. 
		Specifically, the first term on the right hand side on \eqref{sublinear} is only related to the relative compression error. From the upper bounds of $\hlambda$ and $\gamma_{y}$, we have $\gamma_{y}\sim\mathcal{O}\left(\frac{\delta^2}{C^2}\right)$ and $\hlambda\sim\mathcal{O}\left(\frac{\gamma_{y}}{\sqrt{C}}\right)=\mathcal{O}\left(\frac{\delta^2}{C^{2.5}}\right)$, and thus the first term has a dependency of $\mathcal{O}\left(\frac{C^{2.5}}{\delta^2}\right)$ regarding the relative compression error.  For the second  term of \eqref{sublinear}, we have
		\begin{equation*}
			\begin{aligned}
				\frac{4E \tilde{c}\zeta_0 }{\beta K}
				=&\frac{4E}{K} \frac{a_0}{1-a} \frac{8}{\theta_{R}\gamma_{x}}  (L^2\zeta_{c}+A\zeta_{g}+B\zeta_{cx}+D\zeta_{cy}) \\
				=&\frac{32E}{K} \frac{a_0}{1-a}
				\Big(\frac{18L^2\delta_{R,2}^2\sigma^2}{\theta_{R}^{2}\gamma_{x}}\\
				&+\frac{\gamma_{y}}{6 e_1\gamma_{x}^{2}}\big(6\delta_{C,2}^2(1+18C)L^2+\delta_{C,2}^2\big)\sigma^2\\
				&+\frac{L^2  }{648  \gamma_x^{2}}\big((\alpha_x r)^2 \delta \sigma^2_{r}+9\sigma^2(1/(4L^2)+ 18 )\big)\\
				&+\frac{\theta_{C}\gamma_{y} }{108 e_2 \gamma_x^{2}}\big(2(\alpha_y r)^{2}\delta \sigma^2_{r}+3\sigma^2(18+9e_2L^2)
				\big)
				\Big).\\
			\end{aligned}
		\end{equation*}
	Thus, by designing an appropriate dynamic scaling parameter $s_k$, e.g., $a_{0}\propto \gamma_{x}^{2}$, the second term will primarily depend on the absolute compression error. Consequently, we can effectively separate the influence of relative and absolute compression errors  and achieve the $\mathcal{O}\left(\frac{1}{K}\right)$ convergence rate.
	\end{remark}
	\subsection{ PL condition: linear convergence}
	In this part, we show the convergence rate of RCPP can be improved under the additional PL condition. Particularly, the following theorem demonstrates the linear convergence rate of RCPP for minimizing smooth objective functions satisfying the PL condition.
	\begin{theorem}\label{Thm:RCPP-PL}
		Suppose Assumptions \ref{Assumption: network}, \ref{Assumption: Lipschitz}, \ref{Assumption: PL} and \ref{Assumption:General} hold, the scaling parameters $\alpha_x, \alpha_y \in (0, \frac{1}{r}]$, $\olambda\geq M \hlambda$ for some $M>0$,  $s_k^2=c_0c^k$, and the consensus step-sizes $\gamma_x,\gamma_y$ and the maximum step-size $\hlambda$ satisfy the same condition as in Lemma \ref{Lem:Key}. 
		Then, the optimization error $\Omega_{o}^{k}$ and the consensus error $\Omega_{c}^{k}$ both converge to $0$ at the linear rate  $\mathcal{O}(c^{k})$, where $c\in(\tilde{\rho},1)$, $\tilde{\rho}=\max\{1-\frac{1}{2}M\hlambda\mu,1-\frac{\theta_{R}\gamma_x}{16},1-\frac{\theta_{C}\gamma_y}{8},1-\frac{\alpha_x r \delta}{4},1-\frac{\alpha_y r \delta}{16}\}$.
	\end{theorem}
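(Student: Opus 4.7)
The plan is to turn the perturbed descent bound of Lemma \ref{Lem:Key} into a linear contraction on the Lyapunov function $\Omega^k=\Omega_o^k+\frac{EM\hlambda}{\beta}V^k$ by invoking the PL condition, and then to unroll the resulting recursion against the geometrically vanishing noise $s_k^2\tilde{\zeta_0}$.

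First, I would use Assumption \ref{Assumption: PL} to replace $-\frac{M\hlambda}{4}\norm{\nabla f(\oX^k)}^2$ in \eqref{ineq:Key} by $-\frac{M\hlambda\mu}{2}\Omega_o^k$. The $\Omega_o^k$ term then carries the multiplicative contraction $1-M\hlambda\mu/2$, while the four summands of $\frac{EM\hlambda}{\beta}V^k$ already contract with factors $1-3\theta_R\gamma_x/32$, $1-\theta_C\gamma_y/8$, $1-\alpha_x r\delta/4$, and $1-\alpha_y r\delta/16$, respectively. Taking the worst of these (and loosening $3\theta_R\gamma_x/32$ to $\theta_R\gamma_x/16$ as in the statement of $\tilde{\rho}$) collapses everything to the scalar recursion
\begin{equation*}
\Omega^{k+1} \le \tilde{\rho}\,\Omega^k + s_k^2\tilde{\zeta_0}.
\end{equation*}
The step-size conditions inherited from Lemma \ref{Lem:Key}, together with $\olambda\le 1/L$ and $\alpha_x,\alpha_y\in(0,1/r]$, guarantee $\tilde{\rho}\in(0,1)$.

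Second, given $s_k^2=c_0c^k$ with $c\in(\tilde{\rho},1)$, iterating the recursion gives
\begin{equation*}
\Omega^k \le \tilde{\rho}^k\Omega^0 + c_0\tilde{\zeta_0}\sum_{i=0}^{k-1}\tilde{\rho}^{k-1-i}c^i = \tilde{\rho}^k\Omega^0 + \frac{c_0\tilde{\zeta_0}}{c-\tilde{\rho}}\bigl(c^k-\tilde{\rho}^k\bigr) \le \Bigl(\Omega^0+\frac{c_0\tilde{\zeta_0}}{c-\tilde{\rho}}\Bigr)c^k,
\end{equation*}
since $\tilde{\rho}<c<1$ forces both $\tilde{\rho}^k$ and the second term to be $\mathcal{O}(c^k)$.

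Finally, because $\Omega_o^k\ge 0$ and every summand of $V^k=L^2\Omega_c^k+A\Omega_g^k+B\Omega_{cx}^k+D\Omega_{cy}^k$ is nonnegative, I can read off $\Omega_o^k\le\Omega^k=\mathcal{O}(c^k)$ and $L^2\Omega_c^k\le V^k\le\frac{\beta}{EM\hlambda}\Omega^k=\mathcal{O}(c^k)$, which is precisely the claimed linear rate for both the optimization error and the consensus error. All the heavy lifting sits inside Lemma \ref{Lem:Key}; the present theorem is then a clean PL-plus-geometric-noise unrolling, and the only sanity checks are that the window $(\tilde{\rho},1)$ for the driving geometric rate $c$ is nonempty and that the majorization $3\theta_R\gamma_x/32\mapsto\theta_R\gamma_x/16$ used in defining $\tilde{\rho}$ preserves the inequality, both of which are immediate.
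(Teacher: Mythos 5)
Your proposal is correct and follows essentially the same route as the paper: apply the PL inequality to convert the $-\frac{M\hlambda}{4}\|\nabla f(\oX^k)\|^2$ term in Lemma \ref{Lem:Key} into a contraction factor $1-\frac{1}{2}M\hlambda\mu$ on $\Omega_o^k$, majorize all contraction factors by $\tilde{\rho}$, and unroll the recursion $\Omega^{k+1}\le\tilde{\rho}\,\Omega^k+s_k^2\tilde{\zeta_0}$ against the geometric noise $s_k^2=c_0c^k$ with $c\in(\tilde{\rho},1)$. The paper's proof is identical in substance, only writing the geometric sum bound slightly differently.
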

	\begin{proof}
		See Appendix \ref{Pf:RCPP-PL}.
	\end{proof}
	\begin{remark}
		The linear convergence rate is related to $\tilde{\rho}$. From the definition of $\tilde{\rho}$, we find that larger step-size $\hlambda$, larger consensus step-sizes $\gamma_{x},\gamma_{y}$, and smaller compression rate (larger $C$ and smaller  $\delta$) lead to faster convergence. 
	\end{remark}
	\begin{remark}
		It is worth nothing that the linear convergence of RCPP does not depend on the decaying assumption of the absolute compression error as in \cite{michelusi2022finitebit}. 
	\end{remark}
	\section{Numerical Examples}\label{Sec: Simulation}
	In this section, we consider the following logistic regression problem with convex and nonconvex  regularization  respectively to confirm the theoretical  findings \cite{Song2022CPP,michelusi2022finitebit,Zhao2022BEER,Yi2022Communicationb}: 
	\begin{align}\label{Logistic Regression} 
		\min_{x\in \mathbb{R}^{p}}f(x)=\frac{1}{n}\sum_{i=1}^n f_i(x)=\frac{1}{n}\sum_{i=1}^n \big(h_i(x)+\frac{\rho}{2}R(x)\big),
	\end{align}
	where $\rho>0$ is a penalty parameter and $R(x)$ is the regularizer. The objective functions $h_i(x)$  are given by 
	\begin{align}
		h_i(x)=\frac{1}{J}\sum_{i=1}^{J}\ln(1+\exp(-v_{i,j}u_{i,j}^{\T}x)),
	\end{align}
	where $\{(u_{i,j},v_{i,j})\}_{j=1}^{J}$ is the local data stored in agent $i$, $u_{i,j}\in\mathbb{R}^p$ represents the features and  $v_{i,j}\in\mathbb{R}$ represents the labels. 
	The regularization term $R(x)$ is given by $R(x)=\norm{x}^2$ for the convex case, and $R(x)=\sum_{t=1}^{p}\frac{x^2[t]}{1+x^2[t]}$ for the nonconvex case, where $x[t]$ is the $t$-th element of $x$.
	
	The logistic regression model parameter is generated by $x\sim\mathcal{N}(\vzero_{p},\vI_{p})$. The features are generated by 
	$u_{i,j}\sim \mathcal{N}(\vzero_{p},\sigma^2\vI_{p})$. The labels $v_{i,j}$ are set to $1$ if $z_{i,j}\leq\frac{1}{1+\exp(-u_{i,j}^{\T}x)}$ and $-1$ otherwise, where $z_{i,j}$ is uniformly and randomly taken from $z_{i,j}\sim\mathcal{U}(0,1)$. In the experiments, we let $p=500$, $n=100$, $J=10$, $\rho=0.01$, $\sigma=1$. The initial decision variables are randomly generated within $[0,1]^p$. The parameters are hand-tuned to achieve the best performance for each algorithm.
	
	For decentralized optimization over directed networks, we compare the performance of RCPP against CPP \cite{Song2022CPP} and QDGT \cite{xiong2022quantized}. The row-stochastic and column-stochastic weights are randomly generated. To save communication, an adaptation from the $b$-bits $\infty$-norm quantization compressor in \cite{liu2020linear} is used, which is given by
	\begin{align}\label{Quant}
		\cQ(\vx)=\frac{\phi(\|\vx\|_{\infty})} {2^{b-1}} \text{sign}(\vx)  \odot \left \lfloor{\frac{2^{b-1}|\vx|}{\|\vx\|_{\infty}}+\vu}\right \rfloor,
	\end{align}
	where $\text{sign}(\vx)$ is the sign function, $\odot$ is the Hadamard product, $|\vx|$ is the element-wise absolute value of $\vx$, and $\vu$ is a random perturbation vector uniformly distributed in $[0, 1]^{p}$. 
	
	\begin{figure}[htbp]
		\centering
		\subfigure[]{ \includegraphics[scale=0.49]{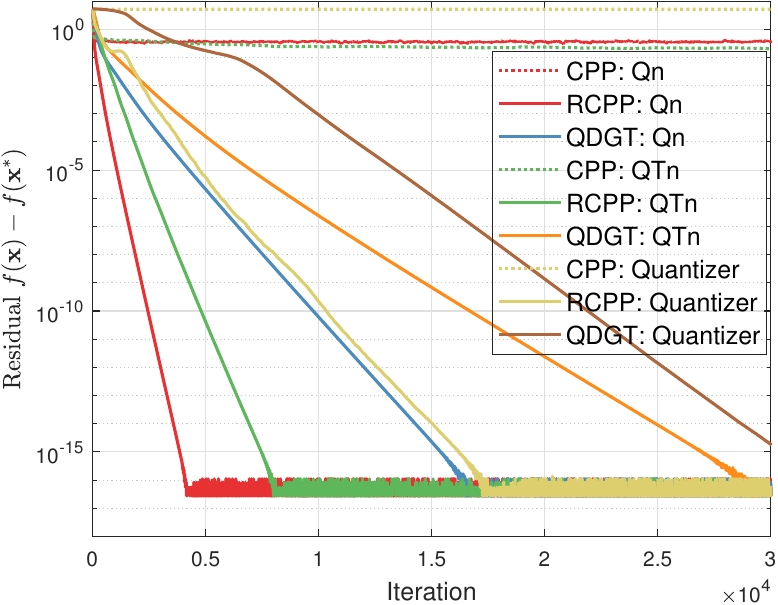} }%
		\hfill
		\subfigure[]{ \includegraphics[scale=0.49]{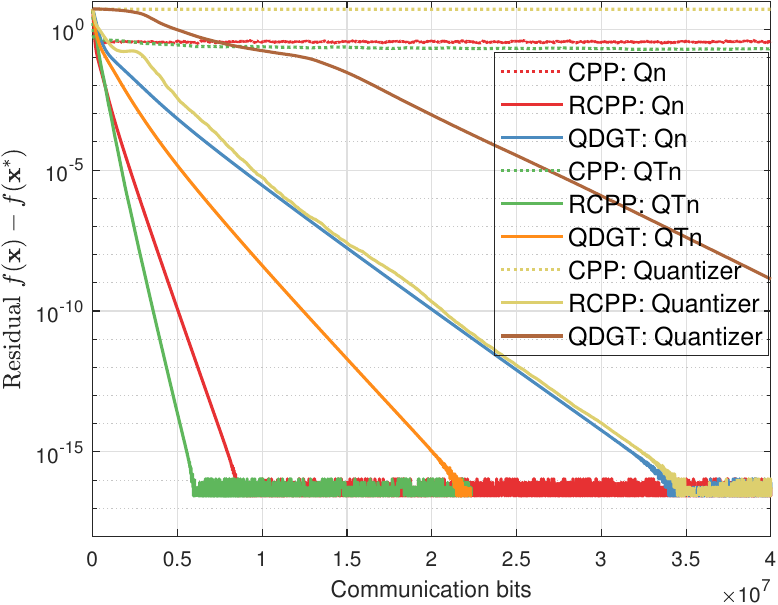} }
		\caption{Residuals $\EE\big[f(\ox^{k})-f(\vx^*)\big]$ against the number of iterations and communication bits respectively for CPP, RCPP and QDGT under different compression methods in the convex case.}
		\label{Fig1}
	\end{figure}
	
	Compared with the original compressor in \cite{liu2020linear}, the modified compression operator utilizes a  mapping $\phi(\|\vx\|)$, where $\phi(\|\vx\|)$ is a random variable defined such that $\phi(\|\vx\|)=\lfloor \|\vx\| \rfloor+1$ with probability $\|\vx\|-\lfloor \|\vx\| \rfloor$ and $\phi(\|\vx\|)=\lfloor \|\vx\| \rfloor$ otherwise. By incorporating $\phi(\|\vx\|)$, only a dynamic number of finite bits, approximately $\log_{2}(\lfloor \|\vx\| \rfloor+1)+1$ bits, are required to be transmitted for nonzero norms. 
	The quantization with the new mapping $\phi(\|\vx\|)$ is denoted as Qn, while QTn represents the composition of quantization and Top-k compression using the same operation. 
	Note that these compression operators yield absolute compression errors and adhere to Assumption \ref{Assumption:General}, whereas QTn does not meet previously established conditions for compression operators. For the experiments conducted in this paper,  the parameters $b$ and $\mathrm{k}$ are set to $b=2$ and $\mathrm{k}=10$ for the quantization and Top-k, respectively. In addition to Qn and QTn, we also investigate the quantizer compression in \cite{Kajiyama2020Linear,xiong2022quantized} which satisfies the absolute compression error assumption. The quantized level is $1$, i.e., the quantized values are from the set $\{-1,0,1\}$.
	
	Next, we evaluate the algorithms' efficiency for both convex and nonconvex cases.
	\subsection{Convex case}
	In Fig. \ref{Fig1}(a), we compare the residuals of CPP, RCPP and QDGT against the number of iterations. It is evident that the performance of CPP deteriorates due to the presence of the absolute compression error. In addition, RCPP consistently outperforms QDGT under different compression methods. 
	
	Then, we further examine the performance of the algorithms in terms of the communication bits. From Fig. \ref{Fig1}(b), we observe that RCPP performs better than the other methods under different compression methods. Additionally, RCPP with QTn achieves the highest communication efficiency. This suggests that, by considering Assumption \ref{Assumption:General} which provides more flexibility in choosing the compression operators, RCPP may achieve superior performance under a specific choice of compressor with fewer communication requirements, while the chosen compressor  may not satisfy the previous assumptions.
	
	\subsection{Nonconvex case}
	\begin{figure}[htbp]
		\centering
		\subfigure[]{ \includegraphics[scale=0.49]{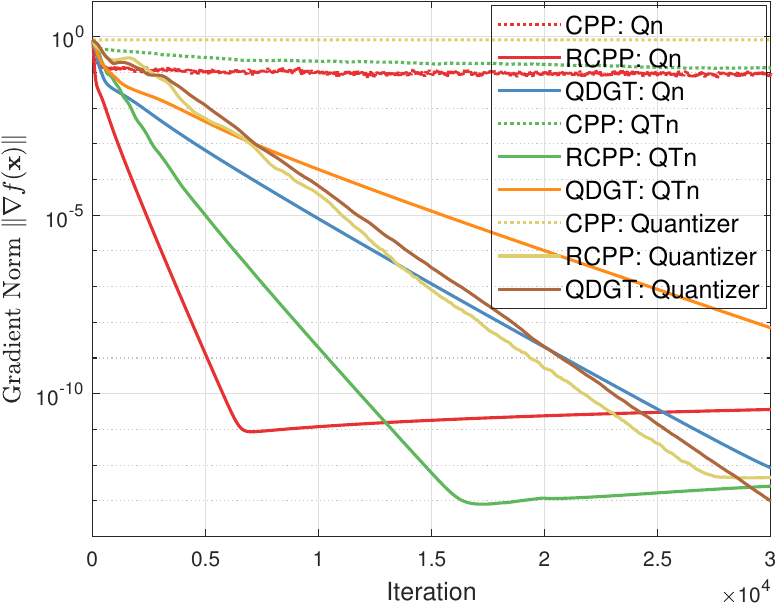} }%
		\hfill
		\subfigure[]{ \includegraphics[scale=0.49]{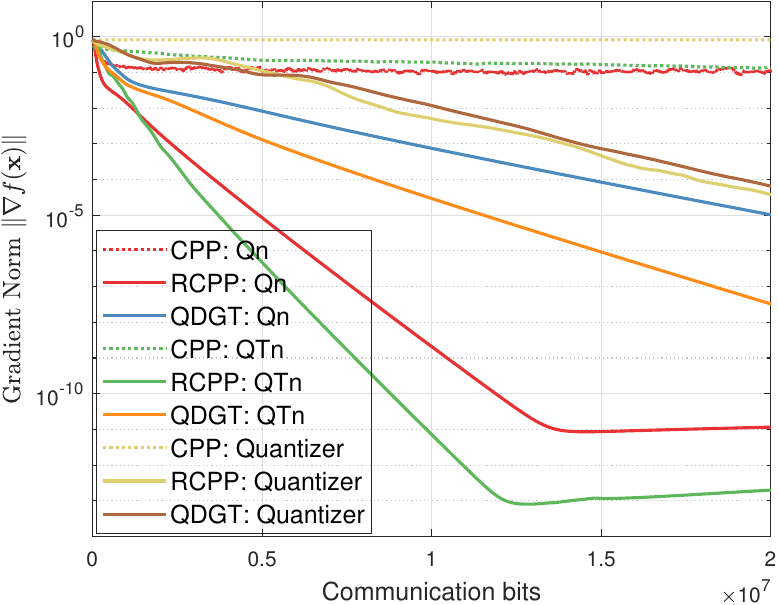} }
		\caption{Gradient norm $\norm{\nabla f(\ox^{k})}$ against the number of iterations and communication bits respectively for CPP, RCPP and QDGT under different compression methods in the nonconvex case.}
		\label{Fig2}
	\end{figure}
	In the nonconvex case, the  gradient norm is used as a metric to evaluate the algorithmic performance. From Fig. \ref{Fig2}(a) and (b), we also find RCPP outperforms CPP and QDGT. In terms of the communication efficiency, RCPP with QTn continues to achieve the best performance. This further demonstrates the advantage of considering the unified assumption, i.e., Assumption \ref{Assumption:General}.

	\section{Conclusions}\label{Sec: Conclusion}
	This paper considers decentralized optimization with communication compression over directed networks. Specifically, we consider a general class of compression operators that allow both relative and  absolute compression errors. For smooth objective functions, we propose a robust compressed push-pull algorithm, termed RCPP. The algorithm is shown to converge sublinearly for general nonconvex objectives and achieve linear convergence rate under an additional PL condition. Numerical results demonstrate that RCPP is efficient and robust to various compressors.
	
	%%%%%%%%%%%%%%%%%%%%%%%%%%%%%%%%%%%%%%%%%%%%%%%%%%%%%%%%%%%%%%%%%%%%%%%%%%%%%%%%%%%%%%%%%%%%%%%%%%	

	%%%%%%%%%%%%%%%%%%%%%%%%%%%%%%%%%%%%%%%%%%%%%%%%%%%%%%%%%%%%%%%%%%%%%%%%%%%%%%%%%%%%%%%%%%%%%%%%%%
	
	\appendix
	\section{Proofs}\label{Appendix1}
	\subsection{Supplementary Lemmas}
	We here give two lemmas that will be frequently used in the proof of Lemma \ref{Lem:Yk} and for the linear system of inequalities in Lemma \ref{Lem:RCPP}.
	\begin{lemma}\label{lem:UV}
		For  $\vU,\vV\in \RR^{n\times p}$ and any constant $\tau>0$,   we have the following inequality:
		{\small
		\begin{align}
			\|\vU+\vV\|^2\leq (1+\tau)\|\vU\|^2 + (1+\frac{1}{\tau})\|\vV\|^2.
		\end{align}}\normalsize
		In particular, taking $1+\tau=\frac{1}{1-\tau_1}, 0<\tau_1<1$, we have% and $1+\tau=\tau_2, \tau_2>1$, we have
		{\small
		\begin{align}\label{ineq:UV1}
			\|\vU+\vV\|^2\leq \frac{1}{1-\tau_1} \|\vU\|^2 + \frac{1}{\tau_1}\|\vV\|^2
		\end{align}}\normalsize
		In addition, for any $\vU_1,\vU_2,\vU_3 \in \RR^{n\times p}$, we have 
		{\small
		\begin{align}\label{ineq:UV3}
			\|\vU_1+\vU_2+\vU_3\|^2\leq 3\|\vU_1\|^2 + 3\|\vU_2\|^2+3\|\vU_3\|^2.
		\end{align}}\normalsize
	\end{lemma}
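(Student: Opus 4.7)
The plan is to prove the three inequalities in sequence, each following from standard inner-product manipulations on the Frobenius norm (or on the Euclidean norm after vectorization, since both expressions are identical on $\RR^{n\times p}$).

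For the first inequality, I would start by expanding $\|\vU+\vV\|^2 = \|\vU\|^2 + 2\langle \vU,\vV\rangle + \|\vV\|^2$, where the inner product is the trace inner product $\langle \vU,\vV\rangle = \operatorname{tr}(\vU^\T \vV)$. Then I would apply Young's inequality in the form $2\langle \vU,\vV\rangle \leq \tau\|\vU\|^2 + \tfrac{1}{\tau}\|\vV\|^2$, which itself comes from expanding the nonnegative quantity $\|\sqrt{\tau}\,\vU - \tfrac{1}{\sqrt{\tau}}\vV\|^2 \geq 0$. Adding $\|\vU\|^2 + \|\vV\|^2$ to both sides yields the claimed bound with factors $(1+\tau)$ and $(1+1/\tau)$.

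The second inequality is a direct reparameterization: substituting $1+\tau = 1/(1-\tau_1)$ gives $\tau = \tau_1/(1-\tau_1)$, so $1+1/\tau = 1/\tau_1$. No new work is required beyond the first inequality. For the third inequality, I would use the convexity of $\|\cdot\|^2$ (equivalently, Cauchy--Schwarz on the constant-one vector), i.e.\ $\|\tfrac{1}{3}\vU_1 + \tfrac{1}{3}\vU_2 + \tfrac{1}{3}\vU_3\|^2 \leq \tfrac{1}{3}\|\vU_1\|^2 + \tfrac{1}{3}\|\vU_2\|^2 + \tfrac{1}{3}\|\vU_3\|^2$, and then multiply through by $9$. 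Alternatively, one could apply the first inequality twice with $\tau = 2$ and then $\tau = 1$, but the convexity route is cleaner.

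There is no real obstacle here: the statement is a textbook Peter--Paul / Young's inequality, and the only mild care needed is to note that the Frobenius norm on $\RR^{n\times p}$ inherits all the Euclidean inner-product identities used in the argument.
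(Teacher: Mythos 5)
Your proof is correct: the expansion of $\|\vU+\vV\|^2$ via the trace inner product, Young's inequality $2\langle \vU,\vV\rangle \leq \tau\|\vU\|^2+\tfrac{1}{\tau}\|\vV\|^2$, the reparameterization $\tau=\tau_1/(1-\tau_1)$, and convexity of $\|\cdot\|^2$ for the three-term bound are all exactly the standard argument. The paper states this lemma without proof (treating it as a known Peter--Paul inequality), so your proposal simply supplies the omitted textbook derivation, and it does so without error.
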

	\begin{lemma}\label{lem:tau}
		For any $\tau\in\RR_{+}$, there holds
		{\small
		\begin{align}
			(1+\frac{\tau}{2})(1-\tau)\leq 1-\frac{\tau}{2}
		\end{align}}\normalsize
	\end{lemma}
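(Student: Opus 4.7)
The plan is to prove this elementary inequality by direct algebraic expansion of the left-hand side. First I would compute $(1+\tau/2)(1-\tau) = 1 - \tau + \tau/2 - \tau^2/2 = 1 - \tau/2 - \tau^2/2$, after which comparing with the target $1 - \tau/2$ reduces the claim to the trivially valid inequality $-\tau^2/2 \leq 0$. This holds for all real $\tau$, and in particular for $\tau \in \RR_{+}$, which concludes the argument.

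There is no analytical obstacle here; the lemma is essentially a one-line identity followed by the nonnegativity of a square. Its value lies in the role it will play later in the convergence analysis: whenever a factor $(1+\tau/2)$ is produced by a Young-type splitting (as in \eqref{ineq:UV1}) and multiplied against a contraction of the form $1-\tau$ --- for example, the mixing-matrix contractions $\|\Pi_{R}\vR_{\gamma}\|_{R} \leq 1 - \theta_{R}\gamma_{x}$ and $\|\Pi_{C}\vC_{\gamma}\|_{C} \leq 1 - \theta_{C}\gamma_{y}$ from Lemma \ref{Norm_R_C} --- one may absorb the extra factor at the cost of halving the contraction rate, leaving a clean $1 - \tau/2$. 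This is precisely the maneuver that will be needed to consolidate the recursions for the consensus and gradient-tracking errors into the linear system of inequalities in Lemma \ref{Lem:RCPP}, so the lemma is stated as a reusable utility rather than invoked inline.
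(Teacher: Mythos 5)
Your expansion $(1+\tfrac{\tau}{2})(1-\tau)=1-\tfrac{\tau}{2}-\tfrac{\tau^2}{2}\leq 1-\tfrac{\tau}{2}$ is correct, and it is the only sensible argument; the paper states this lemma without proof precisely because it reduces to the nonnegativity of $\tau^2/2$. Your remarks on how the lemma is used to absorb Young-splitting factors into the contraction rates are consistent with its role in the derivations leading to Lemma \ref{Lem:RCPP}.
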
	
	
	\subsection{Some Algebraic Results}
	Before deriving the linear system of inequalities in Lemma \ref{Lem:RCPP}, we need some preliminary results on $\oX^{k+1}$, $\Pi_{R}\vX^{k+1}$, and $\Pi_{C}\vY^{k+1}$. From some simple algebraic operations, we know $\Pi_{R} \vR=\vR\Pi_{R}=\vR-\frac{\vone \vu_{R}^{\T}}{n}$, $\Pi_{C} \vC=\vC\Pi_{C}=\vC-\frac{\vu_{C}\vone ^{\T}}{n}$, $\Pi_{R}\vX^{k}=\vX^{k}-\vone \oX^{k}$, $\Pi_{C}\vY^{k}=\vY^{k}- \vu_{C}\oY^{k}$, $(\vI-\vR)\Pi_{R}=\Pi_{R}(\vI-\vR)=\vI-\vR$, and $\Pi_{C}(\vI-\vC)=(\vI-\vC)\Pi_{C}=\vI-\vC$.
	With slight notation abuse, the gradient $\nabla f(\oX^{k})$ is regarded as the row vector.
	
	First, from the equivalent formula \eqref{Xkplus}, we know
	{\small
	\begin{align}\label{oXk}	
		\nonumber\oX^{k+1}=&\frac{1}{n}\vu_{R}^{\T}\vX^{k+1}\\
		\nonumber=&\frac{1}{n}\vu_{R}^{\T}\tX^{k}-\gamma_x \frac{1}{n}\vu_{R}^{\T} (\vI-\vR)\widehat{\vX}^{k}\\
		\nonumber	=&\oX^{k}-\frac{1}{n}\vu_{R}^{\T}\Lambda \vY^{k}\\
		\nonumber	=&\oX^{k}-\frac{1}{n}\vu_{R}^{\T}\Lambda (\vY^{k}-\vu_{C}\oY^{k}+\vu_{C}\oY^{k})\\
		\nonumber	=&\oX^{k}-\overline{\lambda} \oY^{k}-\frac{1}{n}\vu_{R}^{\T}\Lambda \Pi_{C}\vY^{k} \\
		\nonumber=&\oX^{k}-\overline{\lambda}\nabla f(\oX^{k})+\overline{\lambda}(\nabla f(\oX^{k})-\oY^{k}) \\
		&-\frac{1}{n}\vu_{R}^{\T}\Lambda \Pi_{C}\vY^{k},
	\end{align}}\normalsize
	where $\oX^{k}=\frac{1}{n}\vu_{R}^{\T}\vX^{k}$, $\overline{\lambda}=\frac{1}{n}\vu_{R}^\T\Lambda \vu_{C}$ and the fact $\vu_{R}^{\T} (\vI-\vR)=\vzero$ is used in the third equality.
	
	Second, for $\Pi_{R}\vX^{k}=\vX^{k}-\vone \oX^{k}$, we have
	{\small
	\begin{align}
		\nonumber	&\Pi_{R}\vX^{k+1}\\
		\nonumber=&\Pi_{R}\tX^{k}-\gamma_x \Pi_{R} (\vI-\vR)\widehat{\vX}^{k}\\
		\nonumber	=&(\vI-\gamma_x(\vI-\vR))\Pi_{R}\tX^{k}+\gamma_x  (\vI-\vR)\Pi_{R}(\tX^{k}-\widehat{\vX}^{k})\\
		\nonumber	=&\Pi_{R}\vR_{\gamma}\Pi_{R}\vX^{k}-\Pi_{R}\vR_{\gamma}\Lambda\vY^{k}\\
		\label{PiRXk}&+\gamma_x  (\vI-\vR)(\tX^{k}-\widehat{\vX}^{k}),
	\end{align}}\normalsize
	where $\vR_{\gamma}=\vI-\gamma_x(\vI-\vR)$ and the relation 
	$\Pi_{R}\vR_{\gamma}\Pi_{R}=\Pi_{R}\vR_{\gamma}=\vR_{\gamma}\Pi_{R}$ is adopted for the third equality.
	
	Finally, for $\Pi_{C}\vY^{k}=\vY^{k}-\vu_{C} \oY^{k}$, we have
	{\small
	\begin{align}
		\nonumber \Pi_{C}\vY^{k+1}=&\Pi_{C}(\tY^{k}-\gamma_{y}(\vI-\vC)\widehat{\vY}^{k})\\
		\nonumber =&(\vI-\gamma_{y}(\vI-\vC))\Pi_{C}\tY^{k}+\gamma_{y}(\vI-\vC)(\tY^{k}-\widehat{\vY}^{k})\\
		\nonumber=&\Pi_{C}\vC_{\gamma}\Pi_{C}\vY^{k}+\Pi_{C}\vC_{\gamma}(\nabla\vF(\vX^{k+1})-\nabla\vF(\vX^{k}))\\
		\label{PiCYk}&+\gamma_{y}(\vI-\vC)(\tY^{k}-\widehat{\vY}^{k}),
	\end{align}}\normalsize
	where $\vC_{\gamma}=\vI-\gamma_y(\vI-\vC)$ and the relation 
	$\Pi_{C}\vC_{\gamma}\Pi_{C}=\Pi_{C}\vC_{\gamma}=\vC_{\gamma}\Pi_{C}$ is adopted for the third equality.
	
	\subsection{Proof of Lemma \ref{Lem:Yk}}\label{Pf:Yk}
	Based on Lemma \ref{lem:UV}, we obtain
	{\small
	\begin{align}\label{YkNorm}
		\nonumber&\norm{\vY^{k}}_{F}^{2}\\
		\nonumber=&\norm{\vY^{k}-\vu_{C}\oY^{k}+\vu_{C}(\oY^{k}-\nabla f(\oX^{k}))+\vu_{C}\nabla f(\oX^{k})}_{F}^{2}\\
		\nonumber	\leq& 3\norm{ \vY^{k}-\vu_{C}\oY^{k} }_{F}^{2}+3\norm{ \vu_{C}(\oY^{k}-\nabla f(\oX^{k}))}_{F}^{2}\\
		\nonumber&+3\norm{\vu_{C}\nabla f(\oX^{k}) }_{F}^{2}\\
		\nonumber\leq& 3\norm{ \Pi_{C}\vY^{k} }_{C}^{2}+\frac{3\norm{ \vu_{C} }^{2}}{n} L^2 \norm{ \Pi_{R}\vX^{k} }_{R}^{2} \\
		&+3 \norm{\vu_{C}}^{2}\norm{\nabla f(\oX^{k})}^{2},
	\end{align}}\normalsize
	where we use the fact that
	{\small
	\begin{align}
		\nonumber	&\norm{\nabla f(\oX^{k})-\oY^{k}}
		\leq \frac{1}{n}\norm{\vone^{\T}\nabla\vF(\vone \oX^{k})-\vone^{\T}\nabla\vF( \vX^{k})}\\
		\leq&\frac{L}{\sqrt{n}}\norm{\vX^{k}-\vone \oX^{k}}_{F}\leq \frac{L}{\sqrt{n}}\norm{\Pi_{R}\vX^{k}}_{R}.
	\end{align}}\normalsize

	\subsection{Proof of Lemma \ref{Lem:RCPP}}\label{Pf:LMI}
	For simplicity, denote $\cF^k$ as the $\sigma$-algebra generated by $\{\vX^0,\vY^0,\vX^1,\vY^1,\cdots,\vX^{k},\vY^{k}\}$, and define $\EE[ \cdot |\cF^k]$ as the conditional expectation with respect to the compression operator given $\cF^k$.  Below, we prove the four inequalities in \eqref{ineq:LMI} respectively. 
	\subsubsection{First inequality}
	From \eqref{PiRXk}, we have
	{\small
	\begin{align}\label{PiRXkNorm0}
		\nonumber	&\norm{\Pi_{R}\vX^{k+1}}_{R}^{2}\\
		\nonumber	=&\norm{\Pi_{R}\vR_{\gamma}\Pi_{R}\vX^{k}-\Pi_{R}\vR_{\gamma}\Lambda\vY^{k}+\gamma_x  (\vI-\vR)(\tX^{k}-\widehat{\vX}^{k})}_{R}^{2}\\
		\nonumber	\leq&(1-\theta_{R}\gamma_x)\norm{\Pi_{R}\vX^{k}}_{R}^{2}
		+\frac{2}{\theta_{R}\gamma_x}\Big( (1-\theta_{R}\gamma_x)^{2}\norm{\Lambda\vY^{k}}_{R}^{2}\\ 
		&+\gamma_x^2\norm{\vI-\vR}_{R}^{2}\norm{\tX^{k}-\widehat{\vX}^{k}}_{R}^{2}\Big),
	\end{align}}\normalsize
	where the  inequality is due to Lemma \ref{lem:UV} with $\tau_1=\theta_{R}\gamma_x$ and $\norm{\Pi_{R}\vR_{\gamma}}_{R}\leq 1-\theta_{R}\gamma_x$. 
	Using Lemma \ref{Norm_R_C}, we obtain
	{\small
	\begin{align}\label{PiRXkNorm}
		\nonumber	&\norm{\Pi_{R}\vX^{k+1}}_{R}^{2}\\
		\nonumber\leq&(1-\theta_{R}\gamma_x)\norm{\Pi_{R}\vX^{k}}_{R}^{2}\\
		\nonumber&+\frac{2}{\theta_{R}\gamma_x}\Big( (1-\theta_{R}\gamma_x)^{2}\delta_{R,2}^2\norm{\Lambda}_{2}^{2}\norm{\vY^{k}}_{F}^{2}\\  &+\delta_{R,2}^2\gamma_x^2\norm{\vI-\vR}_{R}^{2}\norm{\tX^{k}-\widehat{\vX}^{k}}_{F}^{2}\Big).
	\end{align}}\normalsize
	
	Taking the conditional expectation on $\cF^{k}$ yields
	{\small
	\begin{align}\label{PiXkb1}
		\nonumber	&\EE[\norm{\Pi_{R}\vX^{k+1}}_{R}^{2}\vert \cF^{k}]\\
		\nonumber	\leq&(1-\theta_{R}\gamma_x)\norm{\Pi_{R}\vX^{k}}_{R}^{2}\\
		\nonumber &+\frac{2}{\theta_{R}\gamma_x}\Big[ (1-\theta_{R}\gamma_x)^{2}\delta_{R,2}^2\norm{\Lambda}_{2}^{2}\norm{\vY^{k}}_{F}^{2}\\ \nonumber&+\delta_{R,2}^2\gamma_x^2\norm{\vI-\vR}_{R}^{2}C\norm{\tX^{k}-\vH_{x}^{k}}_{F}^{2}\\
		&+\delta_{R,2}^2\gamma_x^2\norm{\vI-\vR}_{R}^{2} s_k^{2}\sigma^2\Big],
	\end{align}}\normalsize
	where we use the fact that
	{\small
	\begin{align}\label{tXk_hXkb}
		\EE[\norm{\tX^{k}-\widehat{\vX}^{k}}_{F}^{2} \vert \cF^{k}]\leq& C \norm{\tX^{k}-\vH_{x}^{k}}_{F}^{2}+s_k^{2}\sigma^2.
	\end{align}}\normalsize
	For convenience, we rewrite \eqref{PiXkb1} as
	{\small
	\begin{align}\label{PiXkb2}
		\nonumber	&\EE[\norm{\Pi_{R}\vX^{k+1}}_{R}^{2}\vert \cF^{k}]\\
		\nonumber \leq&(1-\theta_{R}\gamma_x)\norm{\Pi_{R}\vX^{k}}_{R}^{2}\\
		\nonumber	&+\frac{1}{\theta_{R}\gamma_x}\Big[ c_2(1-\theta_{R}\gamma_x)^{2}\hlambda^{2}\norm{\vY^{k}}_{F}^{2}\\ &+c_{24}C\gamma_x^2\norm{\tX^{k}-\vH_{x}^{k}}_{F}^{2}\Big]+ s_k^{2}\zeta_{c},
	\end{align}}\normalsize
	where 
		{\small
		$$c_2= \delta_{R,2}^2,
		c_{24}=18\delta_{R,2}^2\ge 2\delta_{R,2}^2\norm{\vI-\vR}_{R}^{2},$$}\normalsize
		and 
		{\small
		\begin{equation*}
			\begin{aligned}
				\zeta_{c}=&18\delta_{R,2}^2\sigma^2/\theta_{R}\\
				\geq& 2\sigma^2\delta_{R,2}^2\norm{\vI-\vR}_{R}^{2}/\theta_{R}\\
				\geq& 2\sigma^2\delta_{R,2}^2\norm{\vI-\vR}_{R}^{2}\gamma_x^2/(\theta_{R}\gamma_x) 
			\end{aligned}
		\end{equation*}}\normalsize
		hold from Lemma \ref{Norm_R_C}.

	\subsubsection{Second  inequality}
	By relation \eqref{PiCYk}, we obtain
	{\small
	\begin{align}\label{PiCYkNorm0}
		\nonumber	&\norm{\Pi_{C}\vY^{k+1}}_{C}^{2}\\
		\nonumber	=&\Big\Vert \Pi_{C}\vC_{\gamma}\Pi_{C}\vY^{k}+\Pi_{C}\vC_{\gamma}(\nabla\vF(\vX^{k+1})-\nabla\vF(\vX^{k}))\\
		\nonumber &	+\gamma_{y}(\vI-\vC)(\tY^{k}-\widehat{\vY}^{k})\Big\Vert_{C}^{2}\\
		\nonumber	\leq&(1-\theta_{C}\gamma_y)\norm{\Pi_{C}\vY^{k}}_{C}^{2}\\
		\nonumber &+\frac{2}{\theta_{C}\gamma_y}\Big( (1-\theta_{C}\gamma_y)^{2}\norm{\nabla\vF(\vX^{k+1})-\nabla\vF(\vX^{k})}_{C}^{2}\\ 
		& +\gamma_y^2\norm{(\vI-\vC)(\tY^{k}-\widehat{\vY}^{k})}_{C}^{2}\Big).
	\end{align}}\normalsize
	where Lemma \ref{lem:UV} is used. Recalling Lemma \ref{Norm_R_C}, we have
	{\small
	\begin{align}\label{PiCYkNorm}
		\nonumber	&\norm{\Pi_{C}\vY^{k+1}}_{C}^{2}\\
		\nonumber	\leq&(1-\theta_{C}\gamma_y)\norm{\Pi_{C}\vY^{k}}_{C}^{2}\\
		\nonumber &+\frac{2}{\theta_{C}\gamma_y}\Big( (1-\theta_{C}\gamma_y)^{2}\delta_{C,2}^2\norm{\nabla\vF(\vX^{k+1})-\nabla\vF(\vX^{k})}_{F}^{2}\\ &+\delta_{C,2}^2\gamma_y^2\norm{\vI-\vC}_{C}^{2}\norm{\tY^{k}-\widehat{\vY}^{k}}_{F}^{2}\Big). 
	\end{align}}\normalsize
	Note that 
	\begin{align}\label{tYk_hYk}
		\nonumber	&\EE[\norm{\tY^{k}-\widehat{\vY}^{k}}_{F}^{2} \vert \cF^{k}]\\
		\nonumber\leq& C \EE[\norm{\tY^{k}-\vH_{y}^{k}}_{F}^{2}\vert \cF^{k}]+s_k^{2}\sigma^2\\
		\nonumber \leq& 2  C\norm{\vY^{k}-\vH_{y}^{k}}_{F}^{2}\\
		& +2 C\EE[\norm{\nabla\vF(\vX^{k+1})-\nabla\vF(\vX^{k})}_{F}^{2}\vert \cF^{k}]+s_k^{2}\sigma^2.
	\end{align}
	Taking the conditional expectation on both sides of \eqref{PiCYkNorm}, we obtain
	{\small
	\begin{align}
		\nonumber	&\EE[\norm{\Pi_{C}\vY^{k+1}}_{C}^{2}\vert \cF^{k}]\\
		\nonumber	\leq&(1-\theta_{C}\gamma_y)\norm{\Pi_{C}\vY^{k}}_{C}^{2}\\
		\nonumber &+\frac{2}{\theta_{C}\gamma_y}\Big[ 2C\delta_{C,2}^2\gamma_y^2\norm{\vI-\vC}_{2}^{2}\norm{\vY^{k}-\vH_{x}^{k}}_{F}^{2}
		+\Big((1-\theta_{C}\gamma_y)^{2}\\ 
		\nonumber &+2C\gamma_y^2\norm{\vI-\vC}_{C}^{2}\Big)\delta_{C,2}^2
		\EE[\norm{\nabla\vF(\vX^{k+1})-\nabla\vF(\vX^{k})}_{F}^{2}\vert \cF^{k}]\\ 
		\label{PiCYkNorm1} &
		+\delta_{C,2}^2\gamma_y^2\norm{\vI-\vC}_{C}^{2}s_k^{2}\sigma^2\Big].
	\end{align}}\normalsize
	Recalling that 
	{\small
	\begin{align}\label{nFp_nFb}
		\norm{\nabla\vF(\vX^{k+1})-\nabla\vF(\vX^{k})}_{F}^{2}\leq L^2 \norm{\vX^{k+1}-\vX^{k}}_{F}^{2},
	\end{align}}\normalsize
	we need to bound $\norm{\vX^{k+1}-\vX^{k}}_{F}^{2}$. From the update of the decision variables in Algorithm \ref{Alg:RCPPe}, we know
	{\small
	\begin{align}
		\nonumber	\vX^{k+1}-\vX^{k}
		=&\widetilde{\vX}^{k}-\gamma_{x}(\vI-\vR)\widehat{\vX}^{k}-\vX^{k}\\
		\nonumber=&\gamma_{x}(\vI-\vR)(\widetilde{\vX}^{k}-\widehat{\vX}^{k})-\gamma_{x}(\vI-\vR)\Pi_{R}\vX^{k}\\
		&-\vR_{\gamma}\Lambda \vY^{k}.
	\end{align}}\normalsize
	Thus, we have 
	{\small
	\begin{align}\label{Xkp_XkNorm}	
		\nonumber	&\norm{\vX^{k+1}-\vX^{k}}_{F}^{2}\\
		\nonumber	\leq&3\norm{\gamma_{x}(\vI-\vR)(\widetilde{\vX}^{k}-\widehat{\vX}^{k})}_{F}^{2}
		+3\norm{\gamma_{x}(\vI-\vR)\Pi_{R}\vX^{k}}_{F}^{2}\\
		\nonumber	&+3\norm{\vR_{\gamma}\Lambda \vY^{k}}_{F}^{2}\\
		\nonumber	\leq&3\gamma_{x}^{2}\norm{\vI-\vR}_{R}^{2}\norm{\widetilde{\vX}^{k}-\widehat{\vX}^{k}}_{F}^{2}
		+3\gamma_{x}^{2}\norm{\vI-\vR}_{R}^{2}\norm{\Pi_{R}\vX^{k}}_{R}^{2}\\
		&+3\hlambda^2\norm{\vR_{\gamma}}_{R}^{2}\norm{ \vY^{k}}_{F}^{2}.
	\end{align}}\normalsize
	Putting \eqref{Xkp_XkNorm}  and  \eqref{nFp_nFb}  back into \eqref{PiCYkNorm1} and using relation \eqref{tXk_hXkb}, we have
	{\small
	\begin{align}\label{PiCYkNorm2}
		\nonumber	&\EE[\norm{\Pi_{C}\vY^{k+1}}_{C}^{2}\vert \cF^{k}]\\
		\nonumber	\leq&(1-\theta_{C}\gamma_y)\norm{\Pi_{C}\vY^{k}}_{C}^{2}
		+\frac{2}{\theta_{C}\gamma_y}\Big[ \Big((1-\theta_{C}\gamma_y)^{2}\\
		\nonumber   &+2C\gamma_y^2\norm{\vI-\vC}_{C}^{2}\Big)\delta_{C,2}^2
		L^2\Big(
		3\gamma_{x}^{2}\norm{\vI-\vR}_{R}^{2}C\norm{\widetilde{\vX}^{k}-\vH_{x}^{k}}_{F}^{2}\\
		\nonumber	&+3\gamma_{x}^{2}\norm{\vI-\vR}_{R}^{2}s_k^{2}\sigma^2
		+3\gamma_{x}^{2}\norm{\vI-\vR}_{R}^{2}\norm{\Pi_{R}\vX^{k}}_{F}^{2}\\
		\nonumber&+3\hlambda^2\norm{\vR_{\gamma}}_{R}^{2}\norm{ \vY^{k}}_{F}^{2} \Big)\\
		\nonumber&+2C\delta_{C,2}^2\gamma_y^2\norm{\vI-\vC}_{C}^{2}\norm{\vY^{k}-\vH_{x}^{k}}_{F}^{2}\\
		&+\delta_{C,2}^2\gamma_y^2\norm{\vI-\vC}_{C}^{2}s_k^{2}\sigma^2\Big].
	\end{align}}\normalsize
	For simplicity, we reorganize \eqref{PiCYkNorm2} as
	{\small
	\begin{align}\label{PiCYkNorm3}
		\nonumber	&\EE[\norm{\Pi_{C}\vY^{k+1}}_{C}^{2}\vert \cF^{k}]\\
		\nonumber	\leq&(1-\theta_{C}\gamma_y)\norm{\Pi_{C}\vY^{k}}_{C}^{2}\\
		\nonumber	&+\frac{1}{\theta_{C}\gamma_y}\Big[ e_1
		L^2\Big(
		c_{34}C\gamma_{x}^{2}\norm{\widetilde{\vX}^{k}-\vH_{x}^{k}}_{F}^{2}
		+c_{32}\gamma_{x}^{2}\norm{\Pi_{R}\vX^{k}}_{F}^{2}\\
		&+ c_3\hlambda^2\norm{ \vY^{k}}_{F}^{2}  \Big)
		+c_{35}C \gamma_y^2 \norm{\vY^{k}-\vH_{x}^{k}}_{F}^{2}\Big]
		+s_k^{2}\zeta_{g},
	\end{align}}\normalsize
	where 
		{\small$$c_3=12\geq 3\norm{\vR_{\gamma}}_{R}^{2},
		d_1=2\delta_{C,2}^2,$$
		$$d_2=36\delta_{C,2}^2\geq4\delta_{C,2}^2\norm{\vI-\vC}_{C}^{2},$$ 
		$$e_1=2\delta_{C,2}^2(1+18C)\geq d_1 +d_2C  \geq d_1(1-\theta_{C}\gamma_y)^{2}+d_2C\gamma_y^2,$$ 
		$$c_{32}=c_{34}=27\geq 3\norm{\vI-\vR}_{R}^{2},$$ 
		$$c_{35}=36\delta_{C,2}^2\geq4 \delta_{C,2}^2 \norm{\vI-\vC}_{C}^{2},$$ }\normalsize
		and 
		{\small
		\begin{equation*}
			\begin{aligned}
				\zeta_{g}=&108\delta_{C,2}^2(1+18C)L^2\sigma^2/\theta_{C}+18\delta_{C,2}^2\sigma^2/\theta_{C}\\
				\geq& 6\sigma^2/\theta_{C} \norm{\vI-\vR}_{R}^{2} L^2\big(d_1+d_2C\big) \\
				&+2\sigma^2/\theta_{C}\delta_{C,2}^2\norm{\vI-\vC}_{C}^{2} \\
				\geq& 6\sigma^2 \norm{\vI-\vR}_{R}^{2} L^2\big(d_1(1-\theta_{C}\gamma_y)^{2}+d_2C\gamma_y^2\big)\gamma_{x}^{2}/(\theta_{C}\gamma_y)\\ &+2\sigma^2\delta_{C,2}^2\norm{\vR_{\gamma}}_{R}^{2}\norm{\vI-\vC}_{C}^{2} \gamma_y^2/(\theta_{C}\gamma_y)
				%$$ 
			\end{aligned}
		\end{equation*}}\normalsize
		hold due to  $\gamma_x\leq\gamma_y\leq 1$ and Lemma \ref{Norm_R_C}.
	
	\subsubsection{Third  inequality}
	Recalling the  update of the variables $\vH_{x}^{k+1}$, $\tX^{k}$, and $\vY^{k+1}$ in Algorithm \ref{Alg:RCPPe}, we have
	{\small
	\begin{equation*}
		\begin{aligned}
			&\tX^{k+1}-\vH_{x}^{k+1}\\
			=&\tX^{k+1}-\tX^{k}+\tX^{k}-(\vH^{k}_{x}+\alpha_x\vQ^{k}_{x})\\
			=&\tX^{k+1}-\tX^{k}+(1-\alpha_x r)(\tX^{k}-\vH^{k}_{x})\\
			&+\alpha_x r(\tX^{k}-\vH^{k}_{x}- \vQ^{k}_{x}/r)	\\
			=&\vX^{k+1}-\vX^{k}-\Lambda(\vY^{k+1}-\vY^{k})\\
			&+(1-\alpha_x r)(\tX^{k}-\vH^{k}_{x})+\alpha_x r(\tX^{k}-\vH^{k}_{x}- \vQ^{k}_{x}/r)
		\end{aligned}
	\end{equation*}}\normalsize
	and 
	{\small
	\begin{align}\label{Ykp_Yk}
		\nonumber	\vY^{k+1}-\vY^{k}
		=&\tY^{k}-\gamma_{y} (\vI-\vC)\widehat{\vY}^{k}-\vY^{k}\\
		\nonumber=&\gamma_{y}(\vI-\vC)(\tY^{k}-\widehat{\vY}^{k})-\gamma_{y}(\vI-\vC)\Pi_{C}\vY^{k}\\
		&+\vC_{\gamma}(\nabla\vF(\vX^{k+1})-\nabla\vF(\vX^{k})).
	\end{align}}\normalsize
	Based on Lemma \ref{lem:UV},  we get
	{\small
	\begin{align}
		\nonumber	&\norm{\tX^{k+1}-\vH_{x}^{k+1}}_{F}^{2}\\
		\nonumber\leq&(1+\frac{2}{\alpha_x r \delta})\norm{\vX^{k+1}-\vX^{k}-\Lambda(\vY^{k+1}-\vY^{k})}_{F}^{2}\\
		\nonumber&+(1+\frac{\alpha_x r \delta}{2})\Big\Vert(1-\alpha_x r)(\tX^{k}-\vH^{k}_{x})\\
		\nonumber&~+\alpha_x r(\tX^{k}-\vH^{k}_{x}- \vQ^{k}_{x}/r)\Big\Vert_{F}^{2}.
	\end{align}}\normalsize
	Taking the conditional expectation on $\cF^{k}$ yields
	{\small
	\begin{align}\label{tXkp_Hxkp}
		\nonumber&\EE[\norm{\tX^{k+1}-\vH_{x}^{k+1}}_{F}^{2} \vert \cF^{k}]\\
		\nonumber	\leq & (1+\frac{2}{\alpha_x r \delta})\Big(2\EE[\norm{\vX^{k+1}-\vX^{k}}_{F}^{2}\vert \cF^{k}]\\
		\nonumber&+2\EE[\norm{\Lambda}_{2}^{2}\norm{\vY^{k+1}-\vY^{k})}_{F}^{2}\vert \cF^{k}]\Big)\\
		\nonumber	&+(1+\frac{\alpha_x r \delta}{2})(1-\alpha_x r \delta)\norm{\tX^{k}-\vH^{k}_{x}}_{F}^{2}\\
		&+s_k^{2}\alpha_x r \sigma^2_{r}(1+\frac{\alpha_x r \delta}{2}),
	\end{align}}\normalsize
	where we use 
	{\small
	\begin{align*}
		&\EE[\norm{(1-\alpha_x r)(\tX^{k}-\vH^{k}_{x})+\alpha_x r(\tX^{k}-\vH^{k}_{x}- \vQ^{k}_{x}/r)}_{F}^{2}\vert \cF^{k}]\\
		\leq&(1-\alpha_x r)\norm{ \tX^{k}-\vH^{k}_{x} }_{F}^{2}+\alpha_x r \EE[\norm{ \tX^{k}-\vH^{k}_{x}- \vQ^{k}_{x}/r }_{F}^{2}\vert \cF^{k}]\\
		\leq& (1-\alpha_x r)\norm{ \tX^{k}-\vH^{k}_{x} }_{F}^{2}+\alpha_x r(1-\delta)\norm{ \tX^{k}-\vH^{k}_{x} }_{F}^{2} \\
		&+\alpha_x r s_k^2 \sigma^2_{r}\\
		=& (1-\alpha_x r\delta)\norm{ \tX^{k}-\vH^{k}_{x} }_{F}^{2}  +s_k^2\alpha_x r \sigma^2_{r}.
	\end{align*}}\normalsize
	Then, we bound $\EE[\norm{\vY^{k+1}-\vY^{k}}_{F}^{2}\vert \cF^{k}]$.  Repeatedly using Lemma \ref{lem:UV} together with relation \eqref{Ykp_Yk}, we have 
	{\small
	\begin{align}
		\nonumber &\EE[\norm{\vY^{k+1}-\vY^{k}}_{F}^{2}\vert \cF^{k}]\\
		\nonumber\leq&3 \EE[\norm{ \gamma_{y}(\vI-\vC)(\tY^{k}-\widehat{\vY}^{k})}_{F}^{2}\vert \cF^{k}]
		+3 \norm{\gamma_{y}(\vI-\vC)\Pi_{C}\vY^{k} }_{F}^{2}\\
		\nonumber&+3 \EE[\norm{ \vC_{\gamma}(\nabla\vF(\vX^{k+1})-\nabla\vF(\vX^{k}))}_{F}^{2}\vert \cF^{k}]\\
		\nonumber\leq&3\gamma_{y}^2\norm{ \vI-\vC}_{C}^{2}\EE[\norm{\tY^{k}-\widehat{\vY}^{k}}_{F}^{2}\vert \cF^{k}]\\
		\nonumber&+3\gamma_{y}^2\norm{ \vI-\vC}_{C}^{2} \norm{ \Pi_{C}\vY^{k} }_{F}^{2}\\
		\nonumber&+3\norm{ \vC_{\gamma} }_{C}^{2} \EE[\norm{  \nabla\vF(\vX^{k+1})-\nabla\vF(\vX^{k})}_{F}^{2}\vert \cF^{k}].
	\end{align}}\normalsize 
	Using \eqref{tYk_hYk}, we obtain
	{\small
	\begin{align}\label{Ykp_Yk1_0}
		\nonumber &\EE[\norm{\vY^{k+1}-\vY^{k}}_{F}^{2}\vert \cF^{k}]\\
		\nonumber \leq&3\gamma_{y}^2\norm{ \vI-\vC}_{2}^{2}\Big( 2C\norm{\vY^{k}-\vH_{y}^{k}}_{F}^{2}\\ \nonumber&+2C\EE[\norm{  \nabla\vF(\vX^{k+1})-\nabla\vF(\vX^{k})}_{F}^{2}\vert \cF^{k}]+s_{k}^2\sigma^2\Big)\\
		\nonumber&
		+3\gamma_{y}^2\norm{ \vI-\vC}_{C}^{2} \norm{ \Pi_{C}\vY^{k} }_{F}^{2}\\
		&+3\norm{ \vC_{\gamma} }_{C}^{2} \EE[\norm{  \nabla\vF(\vX^{k+1})-\nabla\vF(\vX^{k})}_{F}^{2}\vert \cF^{k}].
	\end{align}}\normalsize
	Putting the relation \eqref{nFp_nFb} into \eqref{Ykp_Yk1_0}, we know
	{\small
	\begin{align}\label{Ykp_Yk1}
		\nonumber &\EE[\norm{\vY^{k+1}-\vY^{k}}_{F}^{2}\vert \cF^{k}]\\
		\nonumber\leq&6C\gamma_{y}^2\norm{ \vI-\vC}_{C}^{2}  \norm{\vY^{k}-\vH_{y}^{k}}_{F}^{2} 
		+3\gamma_{y}^2\norm{ \vI-\vC}_{C}^{2}s_{k}^2\sigma^2\\
		\nonumber&+3\gamma_{y}^2\norm{ \vI-\vC}_{C}^{2} \norm{ \Pi_{C}\vY^{k} }_{F}^{2}
		+\Big(3\norm{ \vC_{\gamma} }_{C}^{2}\\
		&~+6C\gamma_{y}^2\norm{ \vI-\vC}_{C}^{2}\Big)L^2 \EE[\norm{\vX^{k+1}-\vX^{k}}_{F}^{2}\vert \cF^{k}].
	\end{align}}\normalsize
	Substituting \eqref{Ykp_Yk1} into \eqref{tXkp_Hxkp}, we have
	{\small
	\begin{align}
		\nonumber&\EE[\norm{\tX^{k+1}-\vH_{x}^{k+1}}_{F}^{2} \vert \cF^{k}]\\
		\nonumber \leq& (1+\frac{2}{\alpha_x r \delta})\Big[\Big(2+6L^2\hlambda^2 \norm{\vC_{\gamma}}_{C}^{2}\\
		\nonumber&~+12CL^2\hlambda^2  \gamma_{y}^2 \norm{\vI-\vC}_{C}^{2}\Big)\EE[\norm{\vX^{k+1}-\vX^{k}}_{F}^{2}\vert \cF^{k}]\\
		\nonumber	&+12C\hlambda^2  \gamma_{y}^2 \norm{\vI-\vC}_{2}^{2} \norm{ \vY^{k}-\vH_{y}^{k}}_{F}^{2}\\
		\nonumber	&+6\hlambda^2 \gamma_{y}^2 \norm{\vI-\vC}_{2}^{2}\norm{\Pi_{C}\vY^{k} }_{C}^{2}
		\Big]\\
		\nonumber	&+(1-\frac{\alpha_x r \delta}{2})\norm{\tX^{k}-\vH^{k}_{x}}_{F}^{2}+s_k^{2}\alpha_x r \sigma^2_{r}(1+\frac{\alpha_x r \delta}{2})\\
		&+6s_{k}^2\sigma^2\hlambda^2  \gamma_{y}^2 \norm{\vI-\vC}_{C}^{2} (1+\frac{2}{\alpha_x r \delta}).
	\end{align}}\normalsize
	
	Noting that  $6L^2\hlambda^2 \norm{\vC_{\gamma}}_{C}^{2}\leq 1 $, $12CL^2\hlambda^2   \norm{\vI-\vC}_{C}^{2}\leq 1 $
	and $\gamma_{y}^2\leq 1$ from the assumption, we have  $2+6L^2\hlambda^2 \norm{\vC_{\gamma}}_{C}^{2} +12CL^2\hlambda^2  \norm{\vI-\vC}_{C}^{2}\gamma_{y}^2\leq 4$. 
	Based on $1+\frac{2}{\alpha_x r \delta}\leq \frac{3}{\alpha_x r \delta}$ and $1+\frac{\alpha_x r \delta}{2}\leq 2$, we obtain
	{\small
	\begin{align}\label{tXkp_Hxkp2}
		\nonumber&\EE[\norm{\tX^{k+1}-\vH_{x}^{k+1}}_{F}^{2} \vert \cF^{k}]\\
		\nonumber\leq& \frac{3}{\alpha_x r \delta}\Big(4\EE[\norm{\vX^{k+1}-\vX^{k}}_{F}^{2}\vert \cF^{k}]\\
		\nonumber&+12C\hlambda^2 \gamma_{y}^2   \norm{\vI-\vC}_{C}^{2} \norm{ \vY^{k}-\vH_{y}^{k}}_{F}^{2}\\
		\nonumber&+6\hlambda^2  \gamma_{y}^2 \norm{\vI-\vC}_{C}^{2}\norm{\Pi_{C}\vY^{k} }_{C}^{2}
		\Big)\\
		\nonumber&+(1-\frac{\alpha_x r \delta}{2})\norm{\tX^{k}-\vH^{k}_{x}}_{F}^{2}\\
		&+2s_k^{2}\alpha_x r \sigma^2_{r}
		+s_{k}^2\sigma^2\hlambda^2  \gamma_{y}^2 \norm{\vI-\vC}_{C}^{2} \frac{18}{\alpha_x r \delta}.
	\end{align}}\normalsize
	Plugging \eqref{Xkp_XkNorm} into \eqref{tXkp_Hxkp2}, we get
	{\small
	\begin{align}
		\nonumber&\EE[\norm{\tX^{k+1}-\vH_{x}^{k+1}}_{F}^{2} \vert \cF^{k}]\\
		\nonumber\leq& \frac{3}{\alpha_x r \delta}\Big(12 \gamma_{x}^{2}\norm{\vI-\vR}_{R}^{2}C\norm{\widetilde{\vX}^{k}-\vH_{x}^{k}}_{F}^{2} \\
		\nonumber&+12\gamma_{x}^{2}\norm{\vI-\vR}_{R}^{2}s_k^{2}\sigma^2
		+12\gamma_{x}^{2}\norm{\vI-\vR}_{R}^{2}\norm{\Pi_{R}\vX^{k}}_{R}^{2}\\
		\nonumber&+12\hlambda^2\norm{\vR_{\gamma}}_{R}^{2}\norm{ \vY^{k}}_{F}^{2}
		+12C\hlambda^2 \gamma_{y}^2   \norm{\vI-\vC}_{C}^{2} \norm{ \vY^{k}-\vH_{y}^{k}}_{F}^{2}\\
		\nonumber&+6\hlambda^2 \gamma_{y}^2  \norm{\vI-\vC}_{C}^{2}\norm{\Pi_{C}\vY^{k} }_{C}^{2}
		\Big)\\
		\nonumber&+(1-\frac{\alpha_x r \delta}{2})\norm{\tX^{k}-\vH^{k}_{x}}_{F}^{2}\\
		&+2s_k^{2}\alpha_x r \sigma^2_{r}
		+s_{k}^2\sigma^2 \hlambda^2  \gamma_{y}^2 \norm{\vI-\vC}_{C}^{2}   \frac{18}{\alpha_x r \delta}.\label{tXkp_Hxkp3}
	\end{align}}\normalsize
	For convenience, we simplify it as
	{\small
	\begin{align}
		\nonumber&\EE[\norm{\tX^{k+1}-\vH_{x}^{k+1}}_{F}^{2} \vert \cF^{k}]\\
		\nonumber\leq& \frac{1}{\alpha_x r \delta}\Big(d_{44} C\gamma_{x}^{2}\norm{\widetilde{\vX}^{k}-\vH_{x}^{k}}_{F}^{2} 
		+d_{42}\gamma_{x}^{2}\norm{\Pi_{R}\vX^{k}}_{R}^{2}\\
		\nonumber&+ c_4\hlambda^2\norm{ \vY^{k}}_{F}^{2}
		+ c_{43}\hlambda^2\gamma_{y}^2\norm{\Pi_{C}\vY^{k}}_{C}^{2}\\
		\nonumber&+c_{45}C\hlambda^2  \gamma_{y}^2  \norm{ \vY^{k}-\vH_{y}^{k}}_{F}^{2}
		\Big)\\
		\label{tXkp_Hxkp4}&+(1-\frac{\alpha_x r \delta}{2})\norm{\tX^{k}-\vH^{k}_{x}}_{F}^{2}+s_k^{2}\zeta_{cx}
		,
	\end{align}}\normalsize
	where 
		{\small
		$$c_4= 48\geq 12\norm{\vR_{\gamma}}_{R}^{2}, 
		d_{42}=d_{44}=324\geq 36\norm{\vI-\vR}_{R}^{2},$$ 
		$$c_{43}= 162\geq18\norm{\vI-\vC}_{C}^{2},   
		c_{45}=324\geq 36\norm{\vI-\vC}_{C}^{2},$$ 
		\begin{equation*}
			\begin{aligned}
				\zeta_{cx}=&2\alpha_x r \sigma^2_{r}+\sigma^2\Big(  \frac{1}{4L^2}+ 18 \Big) \frac{18}{\alpha_x r \delta}\\
				\geq& 2\alpha_x r \sigma^2_{r}+\sigma^2\Big(  9\hlambda^2+ 18 \Big) \frac{18}{\alpha_x r \delta}\\
				\geq& 2\alpha_x r \sigma^2_{r} +\sigma^2\Big(  \norm{\vI-\vC}_{C}^{2}\hlambda^2   +  2\norm{\vI-\vR}_{R}^{2} \Big) \frac{18}{\alpha_x r \delta}\\
				\geq& 2\alpha_x r \sigma^2_{r}+\sigma^2\Big(  \norm{\vI-\vC}_{C}^{2}\hlambda^2  \gamma_{y}^2 +  2\norm{\vI-\vR}_{R}^{2}\gamma_{x}^{2} \Big)  \frac{18}{\alpha_x r \delta} 
			\end{aligned}
		\end{equation*}}\normalsize
		hold from Lemma \ref{Norm_R_C} and the assumption $\hlambda\leq \frac{1}{6L}$.

	\subsubsection{Fourth  inequality}
	Recalling the  updates of the variables $\vH_{y}^{k+1}$ and $\vY^{k+1}$ in Algorithm \ref{Alg:RCPPe}, we know
	{\small
	\begin{align}
		\nonumber	&\vY^{k+1}-\vH_{y}^{k+1}\\
		\nonumber=&\vY^{k+1}-\tY^{k}+\tY^{k}-(\vH^{k}_{y}+\alpha_y\vQ^{k}_{y})\\
		\nonumber	=&\vY^{k+1}-\tY^{k}+(1-\alpha_y r)(\tY^{k}-\vH^{k}_{y})\\
		\nonumber	& +\alpha_y r(\tY^{k}-\vH^{k}_{y}- \vQ^{k}_{y}/r).
	\end{align}}\normalsize
	Based on Lemma \ref{lem:UV},  we have
	{\small
	\begin{align}\label{Ykp_Hkyp1}
		\nonumber&\EE[\norm{ \vY^{k+1}-\vH_{y}^{k+1}}_{F}^{2} \vert \cF^{k}]\\
		\nonumber\leq& (1+\frac{2}{\alpha_y r \delta})\EE[\norm{\vY^{k+1}-\tY^{k} }_{F}^{2}\vert \cF^{k}]\\
		&+ (1-\frac{\alpha_y r \delta}{2})\EE[\norm{\tY^{k}-\vH^{k}_{y}}_{F}^{2}\vert \cF^{k}]
		+2s_k^{2}\alpha_y r \sigma^2_{r},
	\end{align}}\normalsize
	where similar technique as in \eqref{tXkp_Hxkp} is used. The following step is to bound $\EE[\norm{\vY^{k+1}-\tY^{k} }_{F}^{2}\vert \cF^{k}]$. Recalling the update of $\vY^{k+1}$ in Algorithm \ref{Alg:RCPPe}, we get
	{\small
	\begin{equation*}
		\vY^{k+1}-\tY^{k}=\gamma_{y}(\vI-\vC)(\tY^{k}-\widehat{\vY}^{k})-\gamma_{y}(\vI-\vC)\tY^{k}.
	\end{equation*}}\normalsize
	Using Lemma \ref{lem:UV}, we have
	{\small
	\begin{align*}
		&\EE[\norm{\vY^{k+1}-\tY^{k} }_{F}^{2}\vert \cF^{k}]\\
		\nonumber \leq& 2\EE[\norm{\gamma_{y}(\vI-\vC)(\tY^{k}-\widehat{\vY}^{k})}_{F}^{2}\vert \cF^{k}]\\
		\nonumber &+2\EE[\norm{ \gamma_{y}(\vI-\vC)\tY^{k} }_{F}^{2}\vert \cF^{k}].
	\end{align*}}\normalsize
	Reviewing the update $\tY^{k}=\vY^{k}+\nabla\vF(\vX^{k+1})-\nabla\vF(\vX^{k})$, we know
	{\small
	\begin{align*}
		&\EE[\norm{\vY^{k+1}-\tY^{k} }_{F}^{2}\vert \cF^{k}]\\
		\leq& 2\EE[\norm{\gamma_{y}(\vI-\vC)(\tY^{k}-\widehat{\vY}^{k})}_{F}^{2}\vert \cF^{k}]\\
		&+4\EE[\norm{ \gamma_{y}(\vI-\vC)\Pi_{C}\vY^{k}}_{F}^{2}\vert \cF^{k}]\\
		&+4\EE[\norm{ \gamma_{y}(\vI-\vC)(\nabla\vF(\vX^{k+1})-\nabla\vF(\vX^{k})) }_{F}^{2}\vert \cF^{k}].
	\end{align*}}\normalsize
	Using \eqref{tYk_hYk}, we further have
	{\small
	\begin{align}\label{Ykp_tYk1}
		\nonumber &\EE[\norm{\vY^{k+1}-\tY^{k} }_{F}^{2}\vert \cF^{k}]\\
		\nonumber \leq& 4\gamma_{y}^2\norm{ \vI-\vC}_{C}^{2}C \norm{ \vY^{k}-\vH^{k}_{y} }_{F}^{2}\\
		\nonumber &+4\gamma_{y}^2\norm{ \vI-\vC}_{C}^{2}C\EE[\norm{ \nabla\vF(\vX^{k+1})-\nabla\vF(\vX^{k})) }_{F}^{2}\vert \cF^{k}]\\
		\nonumber &+2\gamma_{y}^2\norm{ \vI-\vC}_{C}^{2}s_{k}^2\sigma^2
		+4\gamma_{y}^2\norm{ \vI-\vC}_{C}^{2} \norm{ \Pi_{C}\vY^{k} }_{F}^{2} \\
		\nonumber &+4\gamma_{y}^2\norm{ \vI-\vC}_{C}^{2} \EE[\norm{  \nabla\vF(\vX^{k+1})-\nabla\vF(\vX^{k}))  }_{F}^{2}\vert \cF^{k}]\\
		\nonumber \leq& 4\gamma_{y}^2\norm{ \vI-\vC}_{C}^{2}C \norm{ \vY^{k}-\vH^{k}_{y} }_{F}^{2}
		+4\gamma_{y}^2\norm{ \vI-\vC}_{C}^{2} \norm{ \Pi_{C}\vY^{k} }_{F}^{2}\\
		\nonumber&+4(C+1)\gamma_{y}^2\norm{ \vI-\vC}_{C}^{2}L^2\EE[\norm{  \vX^{k+1} -\vX^{k}  }_{F}^{2}\vert \cF^{k}]\\
		&+2\gamma_{y}^2\norm{ \vI-\vC}_{C}^{2}s_{k}^2\sigma^2.
	\end{align}}\normalsize
	Besides, we bound $\EE[\norm{\tY^{k}-\vH^{k}_{y}}_{F}^{2}\vert \cF^{k}]$ as follows.
	{\small
	\begin{align}\label{tYk_Hky1}
		\nonumber	&\EE[\norm{\tY^{k}-\vH^{k}_{y}}_{F}^{2}\vert \cF^{k}]\\
		\nonumber\leq& (1+\frac{\alpha_y r \delta}{4})\norm{\vY^{k}-\vH^{k}_{y}}_{F}^{2}\\
		\nonumber &+(1+\frac{4}{\alpha_y r \delta}) \EE[\norm{ \nabla\vF(\vX^{k+1})-\nabla\vF(\vX^{k})) }_{F}^{2}\vert \cF^{k}]\\
		\nonumber	\leq& (1+\frac{\alpha_y r \delta}{4})\norm{\vY^{k}-\vH^{k}_{y}}_{F}^{2}\\
		&+(1+\frac{4}{\alpha_y r \delta})L^2\EE[\norm{\vX^{k+1}-\vX^{k} }_{F}^{2}\vert \cF^{k}].
	\end{align}}\normalsize
	Putting \eqref{Ykp_tYk1} and \eqref{tYk_Hky1} back into \eqref{Ykp_Hkyp1}, we get
	{\small
	\begin{align}\label{Ykp_Hkyp2_0}
		\nonumber&\EE[\norm{ \vY^{k+1}-\vH_{y}^{k+1}}_{F}^{2} \vert \cF^{k}]\\
		\nonumber \leq& (1+\frac{2}{\alpha_y r \delta})\Big(4\gamma_{y}^2\norm{ \vI-\vC}_{C}^{2}C \norm{ \vY^{k}-\vH^{k}_{y} }_{F}^{2}\\
		\nonumber&+2\gamma_{y}^2\norm{ \vI-\vC}_{C}^{2}s_{k}^2\sigma^2
		+4\gamma_{y}^2\norm{ \vI-\vC}_{C}^{2} \norm{ \Pi_{C}\vY^{k} }_{F}^{2} \\
		\nonumber&+4(C+1)\gamma_{y}^2\norm{ \vI-\vC}_{C}^{2}L^2\EE[\norm{  \vX^{k+1} -\vX^{k}  }_{F}^{2}\vert \cF^{k}]
		\Big)\\
		\nonumber	&+ (1-\frac{\alpha_y r \delta}{2})\Big[(1+\frac{\alpha_y r \delta}{4})\norm{\vY^{k}-\vH^{k}_{y}}_{F}^{2}\\
		\nonumber&+(1+\frac{4}{\alpha_y r \delta})L^2\EE[\norm{\vX^{k+1}-\vX^{k} }_{F}^{2}\vert \cF^{k}]\Big]\\
		&+2s_k^{2}\alpha_y r \sigma^2_{r}.
	\end{align}}\normalsize
	After rearranging \eqref{Ykp_Hkyp2_0}, we obtain
	{\small
	\begin{align}\label{Ykp_Hkyp2}
		\nonumber&\EE[\norm{ \vY^{k+1}-\vH_{y}^{k+1}}_{F}^{2} \vert \cF^{k}]\\
		\nonumber\leq& \left[(1-\frac{\alpha_y r \delta}{4})+ 4C\gamma_{y}^2\norm{\vI-\vC}_{C}^{2} \frac{3}{\alpha_y r \delta}\right]\norm{\vY^{k}-\vH^{k}_{y}}_{F}^{2}\\
		\nonumber&+\Big[4(C+1)L^2\gamma_{y}^2\norm{\vI-\vC}_{C}^{2} \frac{3}{\alpha_y r \delta}+\frac{4}{\alpha_y r \delta}\Big]\\
		\nonumber&~\cdot L^2\EE[\norm{\vX^{k+1}-\vX^{k} }_{F}^{2}\vert \cF^{k}]\\
		\nonumber&+4\gamma_{y}^2\norm{\vI-\vC}_{C}^{2}\frac{3}{\alpha_y r \delta}\norm{\Pi_{C}\vY^{k} }_{C}^{2}\\
		&+2s_{k}^2\sigma^2\gamma_{y}^2\norm{\vI-\vC}_{C}^{2}\frac{3}{\alpha_y r \delta}
		+2s_k^{2}\alpha_y r \sigma^2_{r},
	\end{align}}\normalsize
	where we use the inequalities $1+\frac{2}{\alpha_y r \delta}\leq  \frac{3}{\alpha_y r \delta}$ and $(1-\frac{\alpha_y r \delta}{2})(1+\frac{4}{\alpha_y r \delta})\leq \frac{4}{\alpha_y r \delta}$. 
	Plugging \eqref{Xkp_XkNorm} into \eqref{Ykp_Hkyp2}, we obtain
	{\small
	\begin{align}\label{Ykp_Hkyp3}
		\nonumber&\EE[\norm{ \vY^{k+1}-\vH_{y}^{k+1}}_{F}^{2} \vert \cF^{k}]\\
		\nonumber\leq& \Big[(1-\frac{\alpha_y r \delta}{4})+ 4C\gamma_{y}^2\norm{\vI-\vC}_{C}^{2} \frac{3}{\alpha_y r \delta}\Big]\norm{\vY^{k}-\vH^{k}_{y}}_{F}^{2}\\
		\nonumber&+\Big[4(C+1)L^2\gamma_{y}^2\norm{\vI-\vC}_{C}^{2} \frac{3}{\alpha_y r \delta}+ \frac{4}{\alpha_y r \delta}\Big]\\
		\nonumber&~ \cdot L^2\Big(
		3\gamma_{x}^{2}\norm{\vI-\vR}_{R}^{2}C\norm{\widetilde{\vX}^{k}-\vH_{x}^{k}}_{F}^{2}+3\gamma_{x}^{2}\norm{\vI-\vR}_{R}^{2}s_k^{2}\sigma^2\\
		\nonumber	&
		+3\gamma_{x}^{2}\norm{\vI-\vR}_{R}^{2}\norm{\Pi_{R}\vX^{k}}_{F}^{2}
		+3\hlambda^2\norm{\vR_{\gamma}}_{R}^{2}\norm{ \vY^{k}}_{F}^{2} \Big)\\
		\nonumber&+4\gamma_{y}^2\norm{\vI-\vC}_{C}^{2} \frac{3}{\alpha_y r \delta}\norm{\Pi_{C}\vY^{k} }_{C}^{2}\\
		&+2s_{k}^2\sigma^2\gamma_{y}^2\norm{\vI-\vC}_{C}^{2}\frac{3}{\alpha_y r \delta}
		+2s_k^{2}\alpha_y r \sigma^2_{r}.
	\end{align}}\normalsize
	Then, we get
	{\small
	\begin{align}
		\nonumber&\EE[\norm{ \vY^{k+1}-\vH_{y}^{k+1}}_{F}^{2} \vert \cF^{k}]\\
		\nonumber\leq& (1-\frac{\alpha_y r \delta}{4})\norm{\vY^{k}-\vH^{k}_{y}}_{F}^{2}\\
		\nonumber&+\frac{1}{\alpha_y r \delta}\Big\{
		d_{55}C\gamma_{y}^2 \norm{\vY^{k}-\vH^{k}_{y}}_{F}^{2}
		+d_{53}\gamma_{y}^2 \norm{\Pi_{C}\vY^{k} }_{C}^{2}\\
		\nonumber&+e_2 L^2\big[
		d_{54}C\gamma_{x}^{2} \norm{\widetilde{\vX}^{k}-\vH_{x}^{k}}_{F}^{2}
		+d_{52}\gamma_{x}^{2} \norm{\Pi_{R}\vX^{k}}_{F}^{2}\\
		&~+c_5\hlambda^2\norm{ \vY^{k}}_{F}^{2} \big]
		\Big\}	
		+s_{k}^2\zeta_{cy},
		\label{Ykp_Hkyp4}
	\end{align}}\normalsize
	where 
		{\small
		$$c_5=12\geq3\norm{\vR_{\gamma}}_{R}^{2}, 
		d_3=d_{53}=d_{55}=108\geq 12\norm{\vI-\vC}_{C}^{2},$$  
		$$d_4=4,d_{52}=d_{54}=27\geq  3 \norm{\vI-\vR}_{R}^{2},$$  
		$$e_2=108C+112
		\geq d_3(C+1) + d_4 \geq  d_3(C+1)\gamma_{y}^2+ d_4 ,$$  
	}\normalsize
	and
		{\small
		\begin{equation*}
			\begin{aligned}
				\zeta_{cy}=&\sigma^2\big(18+9e_2L^2\big)\frac{3}{\alpha_y r \delta}
				+ 2\alpha_y r \sigma^2_{r}\\
				\geq& \sigma^2\big(2\norm{\vI-\vC}_{C}^{2}
				+e_2L^2\norm{\vI-\vR}_{R}^{2}\big) \frac{3}{\alpha_y r \delta}
				+ 2\alpha_y r \sigma^2_{r}\\
				\geq& \sigma^2\big(2\norm{\vI-\vC}_{C}^{2}\gamma_{y}^2+e_2 L^2\norm{\vI-\vR}_{R}^{2}\gamma_{x}^{2}\big) \frac{3}{\alpha_y r \delta}+ 
				2\alpha_y r \sigma^2_{r}
				%	$$ 
			\end{aligned}
		\end{equation*}}\normalsize
		hold from Lemma \ref{Norm_R_C} and $\gamma_{x}\leq\gamma_{y}\leq 1$.

	\subsection{Proof of Lemma \ref{Lem:descent}}\label{Pf:descent}
	From Assumption \ref{Assumption: Lipschitz}, the gradient of $f$ is $L$-Lipschitz continuous. In addition, we know $\oX^{k+1}=\oX^{k}-\frac{1}{n}\vu_{R}^{\T}\Lambda \vY^{k}$. Then, we have
	{\small
		\begin{align}
			\nonumber &f(\oX^{k+1})\\
			\nonumber \leq& f(\oX^{k})+\langle \nabla f(\oX^{k}), \oX^{k+1}-\oX^{k}\rangle 
			+\frac{L}{2}\norm{\oX^{k+1}-\oX^{k}}^2\\
			\nonumber =&f(\oX^{k})-\overline{\lambda}\langle \nabla f(\oX^{k}),  \frac{1}{n\overline{\lambda}}\vu_{R}^{\T}\Lambda \vY^{k}\rangle +\frac{L\overline{\lambda}^2}{2}\norm{\frac{1}{n\overline{\lambda}}\vu_{R}^{\T}\Lambda \vY^{k}}^2.
	\end{align}}\normalsize
	Based on the relation $\langle \va,\vb \rangle=\frac{1}{2}(\norm{\va}^2+\norm{\vb}^2-\norm{\va-\vb}^2)$, we obtain
	{\small	
	\begin{align}
			\nonumber &f(\oX^{k+1})\\
			\nonumber \leq&f(\oX^{k})-\frac{\overline{\lambda}}{2} \Big( \norm{\nabla f(\oX^{k})}^2  +\norm{\frac{1}{n\overline{\lambda}}\vu_{R}^{\T}\Lambda \vY^{k}}^2 \\
			\nonumber &-\norm{\nabla f(\oX^{k})-\frac{1}{n\overline{\lambda}}\vu_{R}^{\T}\Lambda \vY^{k} }^2 \Big) 
			\nonumber+\frac{L\overline{\lambda}^2}{2}\norm{\frac{1}{n\overline{\lambda}}\vu_{R}^{\T}\Lambda \vY^{k}}^2.
	\end{align}}\normalsize
	Note that $\overline{\lambda}\leq\frac{1}{L}$ from the assumption in Lemma \ref{Lem:descent}, we know 
	$\frac{L\overline{\lambda}^2}{2}\norm{\frac{1}{n\overline{\lambda}}\vu_{R}^{\T}\Lambda \vY^{k}}^2
	\leq  \frac{\overline{\lambda}}{2}\norm{\frac{1}{n\overline{\lambda}}\vu_{R}^{\T}\Lambda \vY^{k}}^2$. 
	Meanwhile, we know 
	\begin{align*}
	&\nabla f(\oX^{k})-\frac{1}{n\overline{\lambda}}\vu_{R}^{\T}\Lambda \vY^{k}\\
	=&	\nabla f(\oX^{k})-\oY^{k} -\frac{1}{n\overline{\lambda}}\vu_{R}^{\T}\Lambda \Pi_{C}\vY^{k}.
	\end{align*}
	Thus, we have
	{\small
	\begin{align}
			\nonumber  &f(\oX^{k+1})\\
			\nonumber\leq &f(\oX^{k})-\frac{\overline{\lambda}}{2}\norm{\nabla f(\oX^{k})}^2  \\  
			\nonumber &+\frac{\overline{\lambda}}{2}\norm{\nabla f(\oX^{k})-\oY^{k} -\frac{1}{n\overline{\lambda}}\vu_{R}^{\T}\Lambda \Pi_{C}\vY^{k} }^2 \\
			\nonumber \leq&f(\oX^{k})-\frac{\overline{\lambda}}{2}   \norm{\nabla f(\oX^{k})}^2   +\frac{L^2\overline{\lambda}}{n}\norm{\Pi_{R}\vX^{k}}_{R}^{2}\\
			\nonumber &+\frac{\norm{\vu_{R}}^{2}\hlambda^2}{n^2\overline{\lambda}}\norm{\Pi_{C}\vY^{k}}_{C}^{2}\\
			\nonumber \leq&f(\oX^{k})-\frac{M\hlambda}{2}   \norm{\nabla f(\oX^{k})}^2  \\ 
			\nonumber&+\frac{\norm{\vu_{R}} \norm{\vu_{C}}}{n^2} \hlambda L^2\norm{\Pi_{R}\vX^{k}}_{R}^{2}
			+\frac{\norm{\vu_{R}}^{2}}{n^2M}\hlambda\norm{\Pi_{C}\vY^{k}}_{C}^{2}, 
	\end{align}}\normalsize
	where we use Lemma \ref{lem:UV} in the  second inequality, the assumption $\overline{\lambda}\geq M \hlambda$ and the definition $\overline{\lambda}=\frac{1}{n}\vu_{R}^{\T}\Lambda \vu_{C}\leq \frac{1}{n}\vu_{R}^{\T}\vu_{C}\hlambda\leq \frac{1}{n}\norm{\vu_{R}}\norm{\vu_{C}}\hlambda$ in the third inequality.
	\subsection{Proof of Lemma \ref{Lem:Key}}\label{Pf:Key}
	Let $V^{k}=L^2\Omega_{c}^{k}+A\Omega_{g}^{k}+B\Omega_{cx}^{k}+D\Omega_{cy}^{k}$,
	where 
	$A=\frac{\theta_{C}\gamma_{y}\theta_{R}}{108 e_1\gamma_{x}}$, 
	$B=\frac{L^2 \alpha_x r \delta \theta_{R}}{1296  \gamma_x}$,
	$D=\frac{\theta_{C}\gamma_{y} \alpha_y r \delta \theta_{R}}{108 e_2 \gamma_x}\leq\frac{  \alpha_y r \delta \theta_{R}}{108 e_2  \gamma_x}$. 
	Combining Lemmas \ref{Lem:Yk} and \ref{Lem:RCPP} with the conditions on $\gamma_{x},\gamma_{y},\hlambda$, we have 
	{\small
	\begin{equation}
			\begin{aligned}
				&V^{k+1}\\
				\leq& \Big( 1-\theta_{R}\gamma_x  
				+\frac{A}{\theta_{C}\gamma_y}  e_1 c_{32}\gamma_{x}^2
				+\frac{B/L^2}{\alpha_x r \delta}d_{42}\gamma_{x}^2\\
				&\quad+\frac{D}{\alpha_y r \delta}e_2 d_{52}\gamma_{x}^2
				\Big) L^2 \norm{\Pi_{R}\vX^{k}}_{R}^{2}\\
				&+\Big( 1-\theta_{C}\gamma_y  
				+ \frac{B/L^2}{A\alpha_x r \delta}c_{43}L^2\hlambda^2 \gamma_{y}^2\\
				&\quad+ \frac{D}{A\alpha_y r \delta}d_{53}\gamma_{y}^2
				\Big) A\norm{\Pi_{C}\vY^{k}}_{C}^{2}\\
				&+\Big(1-\frac{\alpha_x r \delta}{2}
				+\frac{L^2}{B\theta_{R}\gamma_x} C c_{24}\gamma_{x}^2
				+ \frac{AL^2}{B\theta_{C}\gamma_y}  e_1 Cc_{34}\gamma_{x}^2\\
				&\quad+\frac{1}{\alpha_x r \delta}Cd_{44}\gamma_{x}^2
				+\frac{DL^2}{B\alpha_y r \delta}e_2 Cd_{54}\gamma_{x}^2
				\Big)B\norm{\tX^{k}-\vH^{k}_{x}}_{F}^{2}\\
				&+\Big(1-\frac{\alpha_y r \delta}{4}
				+\frac{A}{D\theta_{C}\gamma_y}Cc_{35}\gamma_{y}^2
				+\frac{B/L^2}{D\alpha_x r \delta}Cc_{45}L^2\hlambda^2 \gamma_{y}^2\\
				&\quad+\frac{1}{\alpha_y r \delta} Cd_{55}\gamma_{y}^2
				\Big)D\norm{\vY^{k}-\vH^{k}_{y}}_{F}^{2}\\
				&+\Big(
				\frac{1}{\theta_{R}\gamma_x}(1-\theta_{R}\gamma_x)^2c_2L^2\hlambda^2
				+\frac{A}{ \theta_{C}\gamma_y}  e_1 c_3L^2\hlambda^2\\
				&\quad+\frac{B/L^2}{\alpha_x r \delta}c_4L^2\hlambda^2
				+\frac{D}{\alpha_y r \delta}e_2 c_5L^2\hlambda^2
				\Big) \norm{ \vY^{k}}_{F}^{2} +s_k^{2}\zeta_0,
			\end{aligned}\label{Vkplus}
	\end{equation}}\normalsize
	where $\zeta_0=L^2\zeta_{c}+A\zeta_{g}+B\zeta_{cx}+D\zeta_{cy}$. 
	Plugging \eqref{YkNorm} into \eqref{Vkplus}, we have 
	{\small
		\begin{equation}
			\begin{aligned}
				&	V^{k+1}\\
				\leq& \Big( 1-\theta_{R}\gamma_x  
				+\frac{A}{\theta_{C}\gamma_y}  e_1 c_{32}\gamma_{x}^2
				+\frac{B/L^2}{\alpha_x r \delta}d_{42}\gamma_{x}^2\\
				&\quad+\frac{D}{\alpha_y r \delta}e_2 d_{52}\gamma_{x}^2
				\Big) L^2 \norm{\Pi_{R}\vX^{k}}_{R}^{2}\\
				&+\Big( 1-\theta_{C}\gamma_y  
				+ \frac{B/L^2}{A\alpha_x r \delta}c_{43}L^2\hlambda^2\gamma_{y}^2\\
				&\quad+ \frac{D}{A\alpha_y r \delta}d_{53}\gamma_{y}^2
				\Big) A\norm{\Pi_{C}\vY^{k}}_{C}^{2}\\
				&+\Big(1-\frac{\alpha_x r \delta}{2}
				+\frac{L^2}{B\theta_{R}\gamma_x} C c_{24}\gamma_{x}^2
				+ \frac{AL^2}{B\theta_{C}\gamma_y}  e_1 Cc_{34}\gamma_{x}^2\\
				&\quad+\frac{1}{\alpha_x r \delta}Cd_{44}\gamma_{x}^2
				+\frac{DL^2}{B\alpha_y r \delta}e_2 Cd_{54}\gamma_{x}^2
				\Big)B\norm{\tX^{k}-\vH^{k}_{x}}_{F}^{2}\\
				&+\Big(1-\frac{\alpha_y r \delta}{4}
				+\frac{A}{D\theta_{C}\gamma_y}Cc_{35}\gamma_{y}^2
				+\frac{B/L^2}{D\alpha_x r \delta}Cc_{45}L^2\hlambda^2\gamma_{y}^2\\
				&\quad+\frac{1}{\alpha_y r \delta} Cd_{55}\gamma_{y}^2
				\Big)D\norm{\vY^{k}-\vH^{k}_{y}}_{F}^{2}\\			
				&+\Big(
				\frac{1}{\theta_{R}\gamma_x}(1-\theta_{R}\gamma_x)^2c_2L^2\hlambda^2
				+\frac{A}{ \theta_{C}\gamma_y}  e_1 c_3L^2\hlambda^2\\
				&\quad+\frac{B/L^2}{\alpha_x r \delta}c_4L^2\hlambda^2
				+\frac{D}{\alpha_y r \delta}e_2 c_5L^2\hlambda^2
				\Big)\\
				&\quad~\cdot\frac{3\norm{ \vu_{C} }^{2}}{n} L^2 \norm{ \Pi_{R}\vX^{k}}_{R}^{2}\\
				&+\Big(
				\frac{1}{A\theta_{R}\gamma_x}(1-\theta_{R}\gamma_x)^2c_2L^2\hlambda^2
				+\frac{1}{ \theta_{C}\gamma_y}  e_1 c_3L^2\hlambda^2\\
				&\quad+\frac{B/L^2}{A\alpha_x r \delta}c_4L^2\hlambda^2
				+\frac{D}{A\alpha_y r \delta}e_2 c_5L^2\hlambda^2
				\Big) 3A\norm{ \Pi_{C}\vY^{k} }_{C}^{2}\\
				&+\Big(
				\frac{1}{\theta_{R}\gamma_x}(1-\theta_{R}\gamma_x)^2c_2L^2\hlambda^2
				+\frac{A}{ \theta_{C}\gamma_y}  e_1 c_3L^2\hlambda^2\\
				&\quad+\frac{B/L^2}{\alpha_x r \delta}c_4L^2\hlambda^2
				+\frac{D}{\alpha_y r \delta}e_2 c_5L^2\hlambda^2
				\Big) 3 \norm{\vu_{C}}^{2}\norm{\nabla f(\oX^{k})}^2\\
				&+s_k^{2}\zeta_0.
			\end{aligned}\label{Vkplus1}
	\end{equation}}\normalsize
	
	From the definitions of $A,B,D$, we  can bound the terms appearing in \eqref{Vkplus1} as 
	{\small
		$$\frac{A}{\theta_{C}\gamma_y}  e_1 c_{32}\gamma_{x}^2=\frac{\theta_{R}\gamma_{x}}{4},$$
		$$\frac{B/L^2}{\alpha_x r \delta}d_{42}\gamma_{x}^2=\frac{\theta_{R}\gamma_{x}}{4},$$
		$$\frac{D}{\alpha_y r \delta}e_2 d_{52}\gamma_{x}^2=\frac{\theta_{R}\gamma_{x}\theta_{C}\gamma_{y}}{4}\leq \frac{\theta_{R}\gamma_{x}}{4}.$$ }\normalsize
	Recalling the condition on $\gamma_{x},\gamma_{y},\hlambda$, 
	we get
	{\small
		$$ \frac{B/L^2}{A\alpha_x r \delta}c_{43}L^2\hlambda^2\gamma_{y}^2\leq \frac{\theta_{C}\gamma_{y}}{4},$$
		$$\frac{D}{A\alpha_y r \delta}d_{53}\gamma_{y}^2\leq \frac{\theta_{C}\gamma_{y}}{4},$$
		$$\frac{L^2}{B\theta_{R}\gamma_x} C c_{24}\gamma_{x}^2 \leq \frac{\alpha_x r \delta}{16},$$
		$$\frac{AL^2}{B\theta_{C}\gamma_y}  e_1 Cc_{34}\gamma_{x}^2\leq \frac{\alpha_x r \delta}{16},$$
		$$\frac{1}{\alpha_x r \delta}Cd_{44}\gamma_{x}^2\leq \frac{\alpha_x r \delta}{16},$$ 
		$$\frac{DL^2}{B\alpha_y r \delta}e_2 Cd_{54}\gamma_{x}^2\leq \frac{\alpha_x r \delta}{16},$$ 
		$$\frac{A}{D\theta_{C}\gamma_y}Cc_{35}\gamma_{y}^2 \leq \frac{\alpha_y r \delta}{16},$$ 
		$$\frac{B/L^2}{D\alpha_x r \delta}Cc_{45}L^2\hlambda^2\gamma_{y}^2 \leq \frac{\alpha_y r \delta}{16},$$ 
		$$\frac{1}{\alpha_y r \delta} Cd_{55}\gamma_{y}^2 \leq \frac{\alpha_y r \delta}{16},$$
		\vspace{-1.2em}
		\begin{multline*}
			\Big(
			\frac{1}{\theta_{R}\gamma_x}(1-\theta_{R}\gamma_x)^2c_2L^2\hlambda^2
			+\frac{A}{ \theta_{C}\gamma_y}  e_1 c_3L^2\hlambda^2\\
			+\frac{B/L^2}{\alpha_x r \delta}c_4L^2\hlambda^2
			+\frac{D}{\alpha_y r \delta}e_2 c_5L^2\hlambda^2
			\Big) 3E\norm{ \vu_{C} }^{2}
			\leq \frac{\theta_{R}\gamma_{x}}{32},
	\end{multline*}}\normalsize
	where $E=\frac{\norm{\vu_{R}}\norm{\vu_{C}} }{n^2M} \geq \frac{1}{n}$ holds from $M\leq\frac{1}{n}\norm{\vu_{R}}\norm{\vu_{C}}.$
	Meanwhile, we have 
	{\small
		\begin{multline*}
			3\Big(
			\frac{1}{A\theta_{R}\gamma_x}(1-\theta_{R}\gamma_x)^2c_2L^2\hlambda^2
			+\frac{1}{ \theta_{C}\gamma_y}  e_1 c_3L^2\hlambda^2\\
			+\frac{B/L^2}{A\alpha_x r \delta}c_4L^2\hlambda^2
			+\frac{D}{A\alpha_y r \delta}e_2 c_5L^2\hlambda^2
			\Big) 
			\leq \frac{\theta_{C}\gamma_{y}}{4}.
	\end{multline*}}\normalsize
	Therefore, we obtain
	{\small
		\begin{align*}
			&V^{k+1}\\
			\leq& \left( 1-\frac{7\theta_{R}\gamma_x}{32}
			\right) L^2 \Omega_{c}^{k}
			+\left( 1-\frac{\theta_{C}\gamma_y }{4} 
			\right) A\Omega_{g}^{k}\\
			&+\left(1-\frac{\alpha_x r \delta}{4}
			\right)B\Omega_{cx}^{k}
			+\left(1-\frac{\alpha_y r \delta}{16}
			\right)D\Omega_{cy}^{k}\\
			&+ \frac{\beta}{4 E}\norm{\nabla f(\oX^{k})}^2
			+s_k^{2}\zeta_0,
	\end{align*}}\normalsize
	where $\beta=\frac{\theta_{R}\gamma_{x}}{8}$ and $E=\frac{\norm{\vu_{R}}\norm{\vu_{C}} }{n^2M}$. 
	Combining Lemma \ref{Lem:descent}, we obtain
	{\small
		\begin{align*}
			&\Omega_{o}^{k+1}+\frac{E M\hlambda}{\beta} V^{k+1}
			\leq \Omega_{o}^{k}-\frac{M\hlambda}{4}   \norm{\nabla f(\oX^{k})}^2\\
			&\qquad\qquad+\frac{E M\hlambda}{\beta}\Bigg[
			\Big( 1-\frac{3\theta_{R}\gamma_x}{32}\Big) L^2 \Omega_{c}^{k}
			+\Big( 1-\frac{\theta_{C}\gamma_y }{8} \Big) A\Omega_{g}^{k}\\
			&\qquad\qquad+\Big(1-\frac{\alpha_x r \delta}{4}\Big)B\Omega_{cx}^{k}
			+\Big(1-\frac{\alpha_y r \delta}{16}
			\Big)D\Omega_{cy}^{k}
			\Bigg]
			+s_k^{2}\tilde{\zeta_0},
		\end{align*}
	}\normalsize
	where $\tilde{\zeta_0}=\zeta_0\frac{E M\hlambda}{\beta}$ and we use the fact $\gamma_{x}\leq \frac{\sqrt{M\norm{\vu_{C}}}}{\sqrt{108\norm{\vu_{R}}e_1}}\theta_{C}\gamma_y$. 
	
	\subsection{Proof of Theorem \ref{Thm:RCPP-NCVX}}\label{Pf:RCPP-NCVX}
	Based on Lemma \ref{Lem:Key}, we have
	{\small\begin{align*}
			&\Omega_{o}^{k+1}+\frac{E M\hlambda}{\beta} V^{k+1}\leq \Omega_{o}^{k}-\frac{M\hlambda}{4} \norm{\nabla f(\oX^{k})}^2\\
			&\qquad+\frac{E M\hlambda}{\beta}\Bigg[
			L^2 \Omega_{c}^{k}
			+ A\Omega_{g}^{k}
			+B\Omega_{cx}^{k}
			+D\Omega_{cy}^{k}
			\Bigg]
			+s_k^{2}\tilde{\zeta_0}\\
			&\qquad=\Omega_{o}^{k}+\frac{E M\hlambda}{\beta} V^{k}-\frac{M\hlambda}{4}   \norm{\nabla f(\oX^{k})}^2+s_k^{2}\tilde{\zeta_0}.
	\end{align*}}\normalsize
	Noticing the definition of $\Omega^{k}$, we know
	{\small
	\begin{align*}
		&\Omega^{k+1}\leq \Omega^{k}-\frac{M\hlambda}{4}   \norm{\nabla f(\oX^{k})}^2+s_k^{2}\tilde{\zeta_0}.
	\end{align*}}\normalsize
	Summing the above relation over $k=0,1,\ldots,K-1$, we obtain
	{\small
	\begin{align*}
		&\frac{M\hlambda}{4}   \sum_{k=0}^{K-1}\norm{\nabla f(\oX^{k})}^2\leq \Omega^{0}-\Omega^{K}+\frac{a_0}{1-a}\tilde{\zeta_0},
	\end{align*}}\normalsize
	where we use $\sum_{k=0}^{K-1}s_k^{2}=\sum_{k=0}^{K-1}a_{0}a^{k}=\frac{a_0(1-a^{K})}{1-a}<\frac{a_0}{1-a}$.
	The proof is thus completed.
	\subsection{Proof of Theorem \ref{Thm:RCPP-PL}}\label{Pf:RCPP-PL}
	Recalling the PL condition, we obtain $-\frac{M\hlambda}{4}\norm{\nabla f(\oX^{k})}^2\leq -\frac{M\hlambda\mu}{2} (f(\oX^{k})-f(\vX^*)) $. Based on Lemma \ref{Lem:Key}, we have
	{\small\begin{align*}
			&\Omega_{o}^{k+1}+\frac{E M\hlambda}{\beta} V^{k+1}
			\leq\tilde{\rho}(\Omega_{o}^{k} + \frac{E M\hlambda}{\beta} V^{k})+s_k^{2}\tilde{\zeta_0}\\
			\leq&\tilde{\rho}^{k+1}(\Omega_{o}^{0} + \frac{E M\hlambda}{\beta} V^{0})+\sum_{l=0}^{k}\tilde{\rho}^{k-l}c^{l}\Theta\\
			\leq&\tilde{\rho}^{k+1}(\Omega_{o}^{0} + \frac{E M\hlambda}{\beta} V^{0})+ c^{k}\Theta\sum_{l=0}^{k}\left(\frac{\tilde{\rho}}{c}\right)^{k-l}\\
			\leq&\tilde{\rho}^{k+1}(\Omega_{o}^{0} + \frac{E M\hlambda}{\beta} V^{0})+c^{k+1} \frac{\Theta}{c-\tilde{\rho}},
	\end{align*}}\normalsize
	where $\Theta=c_0\tilde{\zeta_0}$. 	Since $c\in (\tilde{\rho},1)$, we have $\tilde{\rho}^{k+1}<c^{k+1}$. It implies that the optimization error $\Omega_{o}^{k}$ and the consensus error $\Omega_{c}^{k}$ both converge to $0$ at the linear rate  $\mathcal{O}(c^{k})$.
		
		% if have a single appendix:
		%\appendix[Proof of the Zonklar Equations]
		% or
		%\appendix  % for no appendix heading
		% do not use \section anymore after \appendix, only \section*
		% is possibly needed			
		% use appendices with more than one appendix
		% then use \section to start each appendix
		% you must declare a \section before using any
		% \subsection or using \label (\appendices by itself
		% starts a section numbered zero.)
		%

		%			\appendices
		%			\section{Proofs for XXX}	
		%			\subsection{Proof of Lemma X1}
		%			\label{subsec: proof eigenvectors u v}
		%		
		%			
		%			\section{Proofs for YYY}
		%			\subsection{Proof of Lemma Y1}
		%			\label{proof: lem: x-xstar_pre gpp}

		% use section* for acknowledgment
		%\section*{Acknowledgment}
		%
		%
		%The authors would like to thank...

		% Can use something like this to put references on a page
		% by themselves when using endfloat and the captionsoff option.
		\ifCLASSOPTIONcaptionsoff
		\newpage
		\fi

	\end{document}